\documentclass[reqno,a4paper,11pt]{amsart}

\usepackage{mystyle}
\usepackage{etoolbox}

\usepackage[headings]{fullpage}
\usepackage{parskip}
\usepackage{xy}
\xyoption{all}

\usepackage[utf8]{inputenc}
\usepackage[T1]{fontenc}
\usepackage[british]{babel}
\usepackage{microtype}
\usepackage{tikz-cd}

\usepackage[pdfencoding=auto,pdfusetitle]{hyperref}
\hypersetup{colorlinks=true, linkcolor=blue!30!black, citecolor=green!30!black, urlcolor=red!45!black}

\usepackage{amsfonts,amssymb,bbm,mathrsfs,stmaryrd}
\usepackage{amsmath}
\usepackage{mathtools}
\usepackage{centernot,url,booktabs}
\usepackage[noabbrev,capitalize]{cleveref}
\usepackage[shortlabels]{enumitem}

\usepackage{algorithm}
\usepackage[noend]{algpseudocode}

\usepackage{tikz}
\usetikzlibrary{matrix}
\usetikzlibrary{positioning}
\usetikzlibrary{calc}
\usetikzlibrary{arrows}
\tikzset{> =stealth}
\usetikzlibrary{arrows.meta}
\tikzset{normalHead/.tip={Triangle[open,angle=60:4pt]},}
\tikzset{normalTail/.tip={Triangle[reversed,open,angle=60:4pt]},}

\newcommand{\addQEDstyle}[2]{\AtBeginEnvironment{#1}{\pushQED{\qed}\renewcommand{\qedsymbol}{#2}}\AtEndEnvironment{#1}{\popQED}}

\theoremstyle{plain}
\newtheorem{theorem}{Theorem}[section]
\newtheorem{lemma}[theorem]{Lemma}
\newtheorem{proposition}[theorem]{Proposition}
\newtheorem{corollary}[theorem]{Corollary}

\theoremstyle{definition}
\newtheorem{definition}[theorem]{Definition}

\newtheorem{example}[theorem]{Example}
\addQEDstyle{example}{$\triangle$}

\newtheorem*{excont}{Example \continuation}
\addQEDstyle{excont}{$\triangle$}

\theoremstyle{remark}
\newtheorem{remark}[theorem]{Remark}
\addQEDstyle{remark}{$\triangle$}

\theoremstyle{definition}

\addQEDstyle{model}{$\triangle$}

\renewcommand{\epsilon}{\varepsilon}
\renewcommand{\phi}{\varphi}

\renewcommand{\S}{\mathbf{S}}
\newcommand{\X}{\mathbf{X}}
\newcommand{\G}{\mathbf{G}}
\newcommand{\Y}{\mathbf{Y}}
\newcommand{\M}{\mathbf{M}}

\newcommand{\op}{^\mathrm{op}}

\newcommand{\inv}{^{-1}}

\newcommand{\Hom}{\mathrm{Hom}}
\newcommand{\Surj}{\mathrm{Surj}}

\newcommand{\Iso}{\mathrm{Iso}}
\newcommand{\Aut}{\mathrm{Aut}}
\newcommand{\End}{\mathrm{End}}

\newcommand{\Grp}{\mathrm{Grp}}

\newcommand{\Inv}{\mathrm{Inv}}

\title{The Reverse Representation Problem}

\author[P. F. Faul]{Peter F. Faul}
\address{Stellenbosch University, Stellenbosch, South Africa}
\email{peter@faul.io}

\author[Z. Janelidze]{Zurab Janelidze}
\address{Stellenbosch University, Stellenbosch, South Africa}
\email{zurab@sun.ac.za}

\author[G. Joubert]{Gideo Joubert}
\address{Stellenbosch University, Stellenbosch, South Africa}
\email{22930701@sun.ac.za}

\date{\today}

\subjclass[2020]{20M30}
\keywords{Representation, Cayley's Theorem, Clifford Semigroup, Heap, Torsor, Transformation}

\begin{document}
\maketitle
\thispagestyle{empty}

\begin{abstract}
Cayley's theorem tells us that all groups $\G$ occur as subgroups of the group of automorphisms over some set $X$. In this paper we consider a `sort-of' converse to this question: given a set $X$ and some transformation group $\S$ over $X$, what are the possible group structures on $X$ that result in groups represented by $\S$? We solve this problem in the more general setting of faithful semigroups and observe that the solutions to this problem, which we term \emph{unrepresentations}, have the structure of a heap. We study this phenomenon in depth and then move onto looking at particular classes of semigroups namely monoids, groups, inverse semigroups and Clifford semigroups.
\end{abstract}

\section{Introduction}

An abstract group $\X$ can be represented as the group $\S$ of transformations of the underlying set $X$, given by the mappings $y\mapsto xy$. According to Cayley's classical theorem \cite{cayley1854vii}, $\X$ is isomorphic to $\S$. This result generalises to the case where $\X$ is a faithful semigroup, i.e., a semigroup in which $xz=yz$ for all $z$, implies $x=y$. We call this corresponding transformation semigroup $\S$ the \emph{representation} of $\X$. A modern treatment of this topic can be found in \cite{jacobson2009CayleyRepresentation} and a version specific to semigroups can be found in \cite{Anderson2006-yu}. 

In this paper we are interested in a `sort-of' converse: given a transformation semigroup $\S$ of a set $X$, does it arise as a representation for some semigroup structure on $X$, and if yes, what can be said about the set of all such semigroup structures? Our goal is two-fold. Firstly, it is to solve the `unrepresentation' problem for semigroups. Secondly, we want to explain the links between the unrepresentation problem and the theory of heaps and torsors.

Regarding this first aim we show that the set of unrepresentations corresponds to a particular set of isomorphisms. A set of isomorphisms is different from a set of automorphisms in that the latter can be given the structure of a group while the former in general cannot be. We note that despite this deficiency of the set of isomorphisms it can nevertheless be given the structure of a heap in which one may define a ternary operation on the set of isomorphisms sending $f,g,h$ to $f \circ g\inv \circ h$. Heaps are themselves related to torsors and both may be thought of, in some sense, as corresponding to a group that has forgotten its identity element \cite{ZoranHeaps, Skorobogatov2001-yk}. This provides a number of valid perspectives to view this problem from.

We are able to use this result to show that when at least one unrepresentation exists, the set of unrepresentations can be made into a group isomorphic to the invertible elements of the centralizer of the transformation semigroup $\S$ in $\End(X)$. The caveat that at least one unrepresentation must exist is a big one and requires an investigation into when this occurs. Although we aren't able to give a complete characterization of such transformation semigroups we provide a necessary condition and prove that all cyclic transformation groups of appropriate size have unrepresentations.

We end by looking at results for particular classes of semigroups, specifically monoids, groups, inverse semigroups and Clifford semigroups. 

\section{The Reverse Representation Problem for Semigroups}

Let $\X = (X,\cdot)$ be a semigroup and $(\End(X),\circ)$ the associated function space with multiplication given by composition. We define a mapping $\phi_\X\colon \X \to (\End(X), \circ)$ with $\phi(x)(y) = x \cdot y$, which is readily seen to be a homomorphism. We often restrict $\phi_\X$ to its image $\S = \phi(X,\cdot)$ and call $\phi_\X$ the representing map and its image $\S$ the \emph{representation} of $\X$.

Given a transformation semigroup $\S \le (\End(X),\circ)$ there are two natural $\S$-actions. The action $\epsilon_\S$ of $\S$ on $X$ given by $f \cdot x = f(x)$ and the action $\circ_\S$ of $\S$ on itself given by composition.

\begin{proposition}
    Let $\X = (X,\cdot)$ be a semigroup and $\S$ its representation. Let $(X,\cdot)$ and $\S$ be equipped with their canonical $\S$-actions. Then $\phi_\X\colon (X,\cdot) \to \S$ is a surjective homomorphism of $\S$-actions.
\end{proposition}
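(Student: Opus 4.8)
The plan is to check the two defining properties of a surjective homomorphism of $\S$-actions one at a time. Surjectivity of $\phi_\X\colon X\to\S$ is immediate: by definition $\S$ is the image $\phi_\X(X)$, so $\phi_\X$ is onto once corestricted to $\S$. The substance of the statement is therefore the $\S$-equivariance of $\phi_\X$, and I expect the only real ``work'' to be unwinding the two actions and invoking associativity of $\cdot$.

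To prove equivariance I must show $\phi_\X(f\cdot x)=f\cdot\phi_\X(x)$ for every $f\in\S$ and $x\in X$, where on the left $f\cdot x$ denotes the action $\epsilon_\S$, i.e.\ $f\cdot x=f(x)$, and on the right $f\cdot\phi_\X(x)$ denotes the action $\circ_\S$, i.e.\ $f\cdot\phi_\X(x)=f\circ\phi_\X(x)$. Since $f\in\S=\phi_\X(X)$, I first pick $a\in X$ with $f=\phi_\X(a)$, so that $f(y)=a\cdot y$ for all $y\in X$. Then $\phi_\X(f(x))$ is the endomorphism $y\mapsto(a\cdot x)\cdot y$, whereas $f\circ\phi_\X(x)$ is the endomorphism $y\mapsto f(x\cdot y)=a\cdot(x\cdot y)$. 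Associativity of the semigroup operation gives $(a\cdot x)\cdot y=a\cdot(x\cdot y)$ for all $y\in X$, so the two endomorphisms agree as elements of $\End(X)$; this is exactly the desired identity. One should also note in passing that both actions are well defined --- $f(x)\in X$ and $f\circ\phi_\X(x)\in\S$ --- but this is immediate from $\S$ being a transformation semigroup on $X$ (in particular closed under composition).

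I do not anticipate a genuine obstacle: the content of the proposition is precisely associativity of $\cdot$, repackaged as the equivariance square, together with the tautology that a map is surjective onto its image. The one place to stay mildly attentive is keeping the two distinct $\S$-actions straight --- evaluation on $X$ versus left composition on $\S$ --- and using the chosen preimage $a$ of $f$ consistently on both sides of the equation.
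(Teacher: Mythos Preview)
Your proposal is correct and follows essentially the same approach as the paper: surjectivity is immediate from the definition of $\S$, and equivariance is checked by writing an arbitrary $f\in\S$ as $\phi_\X(a)$ and reducing both sides of the equivariance identity to associativity of $\cdot$. The only cosmetic difference is that the paper phrases the equivariance condition as the pointwise identity $f(x)\cdot y=f(x\cdot y)$ before invoking the preimage, whereas you pass to the preimage first; the underlying computation is the same.
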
\label{prp:repact}

\begin{proof}
    It is immediate that $\phi_\X$ is surjective.
    We must show that $\phi_\X$ makes the following square commute:
    $$\xymatrix@=50pt{
\S\times X\ar[r]^-{1_S\times \phi_\X}\ar[d]_-{\varepsilon_\S} & \S\times S\ar[d]^-{\circ_\S}\\
X\ar[r]_-{\phi_\X} & S}$$

This will be so if $f(x) \cdot y = f(x \cdot y)$. Since $\phi_\X$ is a surjection $f = \phi(w)$ for some $w$ and hence we see that both sides of the equation reduce to $w \cdot x \cdot y$, completing the proof.
\end{proof}

\begin{definition}
    We denote the set of semigroups with underlying set $X$ and representation $\S$ by $\mathcal{S}(\S)$ and term it the set of \emph{unrepresentations} of $\S$. 
\end{definition}

We get the following simple corollary.

\begin{corollary}
    Let $X$ be a set and $\S \le (\End(X),\circ)$ a transformation semigroup. There exists a function $\ell\colon \mathcal{S}(\S) \to \Surj(\epsilon_\S,\circ_\S)$, sending a semigroup $\X$ to $\phi_\X$.
\end{corollary}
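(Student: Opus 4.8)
The plan is to observe that this is essentially a repackaging of the preceding proposition, so the proof is a matter of checking that the assignment $\X \mapsto \phi_\X$ is legitimate as a function into the stated codomain. First I would define $\ell$ on elements: for $\X \in \mathcal{S}(\S)$, set $\ell(\X) = \phi_\X$, where $\phi_\X$ is the representing map of $\X$. Since $\mathcal{S}(\S)$ consists precisely of semigroup structures on the fixed set $X$ whose representation is $\S$, the hypotheses of \cref{prp:repact} are met for each such $\X$: its underlying set is $X$ and its representation is $\S$.

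Next I would invoke \cref{prp:repact} directly to conclude that, once $(X,\cdot)$ and $\S$ are equipped with the canonical $\S$-actions $\epsilon_\S$ and $\circ_\S$, the map $\phi_\X$ is a surjective homomorphism of $\S$-actions, i.e.\ $\phi_\X \in \Surj(\epsilon_\S,\circ_\S)$. This shows that $\ell(\X)$ genuinely lies in the claimed codomain for every $\X \in \mathcal{S}(\S)$. As $\phi_\X$ is uniquely determined by $\X$ (its rule is $\phi_\X(x)(y) = x\cdot y$), the assignment $\X \mapsto \phi_\X$ is well defined, and hence $\ell$ is a bona fide function $\mathcal{S}(\S) \to \Surj(\epsilon_\S,\circ_\S)$.

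There is no real obstacle here: all the mathematical content has already been extracted in \cref{prp:repact}. The only point that warrants a word of care is bookkeeping about the codomain — namely that the surjections in question are surjections \emph{of $\S$-actions} (from $\epsilon_\S$ to $\circ_\S$), which is exactly what the proposition produces, rather than mere surjections of underlying sets. I would therefore keep the proof to one or two sentences, simply citing \cref{prp:repact} and noting that it applies to every member of $\mathcal{S}(\S)$ by definition of that set.
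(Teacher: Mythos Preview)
Your proposal is correct and matches the paper's approach: the paper states this as an immediate corollary of \cref{prp:repact} with no written proof, and your argument is precisely the one-line unpacking of why it follows. There is nothing to add.
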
\label{cor:fun}

With these somewhat trivial preliminaries out of the way we are ready to pose the question at the heart of this paper: Given a set $X$ and a transformation semigroup $\S \le (\End(X),\circ)$, what semigroup structures can be equipped to $X$ so that $(X,\cdot)$ is represented by $\S$. As is perhaps suggested by Proposition \ref{prp:repact}, the answer has to do with actions.

Let $\S \le (\End(X),\circ)$ be a transformation semigroup. Given a homomorphism $\phi$ of the $\S$-actions $\epsilon_\S$ and $\circ_\S$, it is possible to equip $X$ with a multiplication making $\phi$ a homomorphism of semigroups. When $\phi$ is surjective it will be the representing map of $\X$.

\begin{proposition}
    Let $X$ be a set, $\S \le (\End(X),\circ)$ a transformation semigroup and $\phi\colon X \to S$ a morphism of $\S$-actions as depicted below     
    $$\xymatrix@=50pt{
\S\times X\ar[r]^-{1_S\times \phi}\ar[d]_-{\varepsilon_\S} & \S\times S\ar[d]^-{\circ_\S}\\
X\ar[r]_-{\phi} & S}$$
Then the binary operation $\cdot \colon X \times X \to X$, with $x \cdot y = \phi(x)(y)$, makes $(X,\cdot)$ a semigroup, $\phi$ a semigroup homomorphism. Moreover, when $\phi$ is surjective it is the representing map of $(X,\cdot)$.
\end{proposition}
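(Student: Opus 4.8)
To establish the proposition I would verify three things in turn: associativity of $\cdot$, the homomorphism property of $\phi$, and the claim about the representing map. The key technical fact is the commutativity of the given square, which unpacks to the identity $\phi(f(x)) = f \circ \phi(x)$ for all $f \in \S$ and $x \in X$, i.e. applying $\phi$ after acting on $X$ equals composing $f$ with $\phi$ of the point. This is the lever for everything.

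First, for the homomorphism property I would compute $\phi(x \cdot y)$ using $x \cdot y = \phi(x)(y)$; since $\phi(x) \in \S$, the square gives $\phi(\phi(x)(y)) = \phi(x) \circ \phi(y)$, which is exactly $\phi(x) \cdot_{\circ} \phi(y)$ — so $\phi$ turns $\cdot$ into composition. Second, for associativity I would expand $(x \cdot y) \cdot z = \phi(\phi(x)(y))(z)$ and $x \cdot (y \cdot z) = \phi(x)(\phi(y)(z))$; by the homomorphism identity just proved, $\phi(\phi(x)(y)) = \phi(x) \circ \phi(y)$, so the left side is $(\phi(x) \circ \phi(y))(z) = \phi(x)(\phi(y)(z))$, matching the right side. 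Thus associativity is a direct consequence of the square, not an independent hypothesis.

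Third, suppose $\phi$ is surjective. I need to show $\phi = \phi_{(X,\cdot)}$, i.e. that the representing map built from the new multiplication coincides with $\phi$. By definition $\phi_{(X,\cdot)}(x)$ is the transformation $y \mapsto x \cdot y = \phi(x)(y)$, which is literally $\phi(x)$ as an element of $\End(X)$; so $\phi_{(X,\cdot)} = \phi$ as maps into $\End(X)$, and in particular the image of $\phi_{(X,\cdot)}$ is the image of $\phi$. Surjectivity of $\phi$ onto $S$ then says this image is exactly $\S$, so $\S$ is the representation of $(X,\cdot)$ and $\phi$ is its representing map.

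**Main obstacle.** There is no deep obstacle here; the only thing to be careful about is bookkeeping the two different $\S$-actions and making sure the square is applied with the right element of $\S$ (namely $\phi(x)$, which lands in $S$ precisely because $\phi$ maps into $S$). One should also note in passing that nothing forces $\phi$ to be injective, so $(X,\cdot)$ need not be faithful unless $\phi$ is — but faithfulness is not claimed, only that $\S$ is the representation, so this is a non-issue for the statement as given.
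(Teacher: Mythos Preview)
Your proposal is correct and follows essentially the same route as the paper: both arguments unpack the square to the identity $\phi(f(y)) = f \circ \phi(y)$ and apply it with $f = \phi(x)$ to get associativity and the homomorphism property, then observe that $\phi_{(X,\cdot)}(x)(y) = x\cdot y = \phi(x)(y)$ so that $\phi$ is literally the representing map when surjective. The only cosmetic difference is ordering---you establish the homomorphism identity first and derive associativity from it, whereas the paper writes the associativity chain first---but the underlying computation is identical.
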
\label{prp:actmul}

\begin{proof}
    We begin by proving that this operation is associative. Consider the equation
    \begin{align*}
        (x \cdot y) \cdot z &= \phi(\phi(x)(y))(z) \\
                            &= \phi(x)(\phi(y)(z) \\
                            &= x \cdot (y \cdot z)
    \end{align*}
    where the penultimate line follows from $\phi$ being a homomorphism of $S$-actions. Hence $(X,\cdot)$ is a semigroup.

    It remains to show that $\phi$ is a semigroup homomorphism. Since $\phi$ is a morphism of actions we have
    \begin{align*}
        \phi(x\cdot y)  &= \phi(\phi(x)(y)) \\
                        &= \phi(x) \circ \phi(y).
    \end{align*}
    Since $\phi$ is surjective its image is $\S$. It is immediate that $\phi$ is in fact the representation of $(X,\cdot)$ as by definition $\phi(x)(y) = x \cdot y$.
\end{proof}

\begin{corollary}
    Let $X$ be a set and $\S \le (\End(X),\circ)$ a transformation semigroup. There exists a function $k\colon \Surj(\epsilon_\S,\circ_\S) \to \mathcal{S}(S)$, sending an action $\phi$ to the semigroup $(X,\epsilon_\S \circ (\phi \times 1_X))$.
\end{corollary}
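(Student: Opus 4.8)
The plan is to obtain this almost immediately from Proposition \ref{prp:actmul}. First I would unwind the definition of the claimed operation: for $x,y \in X$ we have $(\phi \times 1_X)(x,y) = (\phi(x), y)$ and $\epsilon_\S(\phi(x), y) = \phi(x)(y)$, so the binary operation $\epsilon_\S \circ (\phi \times 1_X)$ is exactly the operation $x \cdot y = \phi(x)(y)$ studied in Proposition \ref{prp:actmul}. Thus the proposed $k$ sends $\phi$ to the semigroup $(X, \cdot)$ constructed there.

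Next I would apply Proposition \ref{prp:actmul} directly. Since $\phi$ is a morphism of the $\S$-actions $\epsilon_\S$ and $\circ_\S$, that proposition guarantees that $(X, \cdot)$ is a semigroup and $\phi$ a semigroup homomorphism; and since an element of $\Surj(\epsilon_\S, \circ_\S)$ is in particular surjective, it further guarantees that $\phi$ is the representing map of $(X, \cdot)$. It then remains only to observe that the representation of $(X, \cdot)$ — the image of its representing map $\phi$ inside $(\End(X), \circ)$ — is all of $\S$, which holds because $\phi$ surjects onto $S$. Hence $(X, \cdot) \in \mathcal{S}(\S)$, so the assignment indeed takes values in $\mathcal{S}(\S)$.

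Finally, to see that $k$ is a well-defined function I would note that $(X, \cdot)$ is uniquely pinned down by $\phi$, as $\epsilon_\S \circ (\phi \times 1_X)$ is a single explicitly given map $X \times X \to X$ and no choice is involved. I do not anticipate any real obstacle here: essentially all the content lives in Proposition \ref{prp:actmul}, and the only point requiring a moment's care is the passage from ``$\phi$ is a representing map'' to ``$\S$ is the representation'', which is supplied by surjectivity of $\phi$ onto $S$.
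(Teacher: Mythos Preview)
Your proposal is correct and matches the paper's approach exactly: the paper states this as an immediate corollary of Proposition~\ref{prp:actmul} with no separate proof, and your argument simply unpacks why that proposition yields the result (identifying $\epsilon_\S \circ (\phi \times 1_X)$ with the operation $x\cdot y = \phi(x)(y)$ and using surjectivity of $\phi$ to land in $\mathcal{S}(\S)$).
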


\begin{proposition}
 Let $X$ be a set and $S\le (\End(X),\circ)$ a transformation semigroup. Then the function $k\colon \Surj(\epsilon_\S,\circ_\S) \to \mathcal{S}(S)$, $\phi \mapsto (X,\epsilon_\S \circ (\phi \times 1_X))$ is a bijection.
\end{proposition}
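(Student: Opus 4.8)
The plan is to show that $k$ is a bijection by exhibiting a two-sided inverse, and the natural candidate is the map $\ell\colon \mathcal{S}(\S)\to\Surj(\epsilon_\S,\circ_\S)$ of Corollary~\ref{cor:fun}, which sends a semigroup $\X$ to its representing map $\phi_\X$. It therefore suffices to verify the two identities $k\circ\ell=\id_{\mathcal{S}(\S)}$ and $\ell\circ k=\id_{\Surj(\epsilon_\S,\circ_\S)}$; together these force $k$ to be bijective.

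For the first identity, I would start with a semigroup $\X=(X,\cdot)$ whose representation is $\S$. Then $\ell(\X)=\phi_\X$, and by definition $k(\phi_\X)$ is the set $X$ equipped with the multiplication $x*y=\epsilon_\S(\phi_\X(x),y)=\phi_\X(x)(y)$. But $\phi_\X(x)(y)=x\cdot y$ by the very definition of the representing map, so $*$ coincides with $\cdot$ and $k(\ell(\X))=\X$.

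For the second identity, I would start with a surjective morphism of $\S$-actions $\phi\colon X\to S$. Then $k(\phi)=(X,\cdot)$ with $x\cdot y=\phi(x)(y)$, and Proposition~\ref{prp:actmul} tells us precisely that $\phi$ is the representing map of this semigroup; moreover, since $\phi$ is surjective its image is all of $\S$, so the representation of $(X,\cdot)$ is exactly $\S$ and $k(\phi)$ genuinely lies in $\mathcal{S}(\S)$ (as already asserted by the corollary immediately preceding this proposition). Hence $\ell(k(\phi))=\phi_{(X,\cdot)}=\phi$.

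Both composites being identities, $k$ is a bijection with inverse $\ell$. I do not expect a genuine obstacle here beyond bookkeeping; the one point that deserves care is the claim that $k(\phi)$ has representation exactly $\S$ rather than merely a subsemigroup of $\S$, and this is precisely where the surjectivity hypothesis on $\phi$ enters, via Proposition~\ref{prp:actmul}.
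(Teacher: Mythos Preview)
Your proposal is correct and matches the paper's own proof: both establish the bijection by showing that $\ell$ from Corollary~\ref{cor:fun} is a two-sided inverse to $k$, with the identity $\ell\circ k=\id$ reduced to Proposition~\ref{prp:actmul} and $k\circ\ell=\id$ checked by unfolding the definition of $\phi_\X$. The only cosmetic difference is the order in which the two composites are verified.
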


\begin{proof}
    It suffices to show that $\ell$ from Corollary \ref{cor:fun} is the inverse of $k$. First consider 
    $\ell \circ k (\phi) = \ell (X,\epsilon_\S \circ (\phi \times 1_X))$ we know that $\ell$ returns the representation of $(X,\epsilon_\S \circ (\phi \times 1_X))$ and from Proposition $\ref{prp:actmul}$ we know that $\phi$ is the representation of $(X,\epsilon_\S \circ (\phi \times 1_X))$. Hence $\ell \circ k(\phi)= \phi$ as required.

    Finally consider $k \circ \ell (\X) = k(\phi_\X) = (X,\epsilon_\S \circ (\phi_\X \times 1_X))$ and note that 
    \begin{align*}
        \epsilon_\S \circ (\phi_\X \times 1_X)(x,y)   &= \epsilon_\S(\phi_\X(x),y) \\
                                                    &= \phi_\X(x)(y) \\
                                                    &= x \cdot y
    \end{align*}

    Thus the multiplications agree and hence $k \circ \ell (\X) = \X$.
\end{proof}

\begin{remark}
    A more general result can be proved where $\Hom(\epsilon_\S,\circ_\S)$ is shown to be in bijection with the set of semigroups on $X$ such that 
    \begin{enumerate}
\item The image of $\phi_\X$ lies in $\S$,
\item $s(x\cdot y)=s(x)\cdot y$ holds for all $s\in S$ and $x,y\in X$.
\end{enumerate}
    However we will have no cause to make use of this added generality in the rest of this paper.
\end{remark}

Hence we see that the set $\mathcal{S}(\S)$ of unrepresentations of $\S$ is simply the set of surjective action homomorphisms from $\epsilon_\S$ to $\circ_\S$. This perspective allows us to equip $\mathcal{S}(\S)$ with an algebraic operation for a natural class of semigroups.

\section{The Heap of Unrepresentations}

In the group and monoid case we can always rely on representing maps to be isomorphisms. For semigroups this is generally not the case as it is possible for two distinct elements $x$ and $y$ to be such that $xz = yz$ for all $z$. For this reason one often encounters the notion of faithful action of semigroups in the literature. In the introduction to \cite{Clifford1961-V1}, the authors provide a quick introduction to what they term `(left) representative' semigroups. These are semigroups in which the `(left) canonical representation' (this is exactly our notion of representation) is faithful (read injective). They prove that this happens if and only if the condition above is not satisfied. 
Due to `representative' being a somewhat overloaded term in the literature we name this class as follows.

\begin{definition}
    A semigroup $(X,\cdot)$ is \emph{faithful} if its representation is injective.
\end{definition}

In the case of faithful semigroups we can restrict to the image of the representation and the resulting subsemigroup $\S = \phi(X,\cdot)$ will necessarily be isomorphic to $(X,\cdot)$. Such a representation is said to be \emph{faithful}. From here on we will use representation to mean faithful representation.

\begin{corollary}
     Let $X$ be a set and $S\le (\End(X),\circ)$ a transformation semigroup. Then the function $k\colon \Iso(\epsilon_\S,\circ_\S) \to \mathcal{S}(S)$, $\phi \mapsto (X,\epsilon_\S \circ (\phi \times 1_X))$ is a bijection.
\end{corollary}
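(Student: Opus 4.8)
The plan is to deduce this corollary from the preceding proposition, which already establishes that $k\colon \Surj(\epsilon_\S,\circ_\S) \to \mathcal{S}(\S)$, $\phi \mapsto (X,\epsilon_\S \circ (\phi \times 1_X))$, is a bijection. Since $\Iso(\epsilon_\S,\circ_\S) \subseteq \Surj(\epsilon_\S,\circ_\S)$ and the map $k$ is already injective on the larger domain, it suffices to show that $k$ restricts to a \emph{surjection} onto $\mathcal{S}(\S)$ when the domain is cut down to isomorphisms; equivalently, that every unrepresentation $\X \in \mathcal{S}(\S)$ has $\ell(\X) = \phi_\X$ landing not merely in $\Surj(\epsilon_\S,\circ_\S)$ but in $\Iso(\epsilon_\S,\circ_\S)$. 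In other words, the content is: if $\S$ is (isomorphic to) the faithful representation of some semigroup on $X$, then the representing map $\phi_\X$ is a bijection of underlying sets, hence an isomorphism of $\S$-actions.

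First I would recall that by definition every element of $\mathcal{S}(\S)$ is a \emph{faithful} semigroup, since from this point onward in the paper ``representation'' means faithful representation; so for $\X = (X,\cdot) \in \mathcal{S}(\S)$ the representing map $\phi_\X$ is injective. Second, $\phi_\X$ is surjective onto $\S$ by construction (its image \emph{is} $\S$). Hence $\phi_\X$ is a bijection of sets, and being already a morphism of $\S$-actions by Proposition \ref{prp:repact}, its set-theoretic inverse is automatically an $\S$-action morphism as well; thus $\phi_\X \in \Iso(\epsilon_\S,\circ_\S)$. Third, I would note that the two maps $k$ and $\ell$ from the previous results restrict correctly: $\ell$ sends $\mathcal{S}(\S)$ into $\Iso(\epsilon_\S,\circ_\S)$ by the argument just given, and $k$ sends $\Iso(\epsilon_\S,\circ_\S)$ into $\mathcal{S}(\S)$ because an isomorphism is in particular a surjection, so $k(\phi) \in \mathcal{S}(\S)$ by the previous proposition, and $k$ and $\ell$ remain mutually inverse on these restricted domains since they already were on the unrestricted ones.

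I do not anticipate a genuine obstacle here — the statement is essentially a bookkeeping consequence of the faithfulness convention adopted just before the corollary, combined with the already-proved bijection $k\colon \Surj(\epsilon_\S,\circ_\S) \to \mathcal{S}(\S)$. The one point that deserves a careful sentence, rather than being waved through, is the claim that a bijective morphism of $\S$-actions is an isomorphism of $\S$-actions, i.e.\ that the inverse function respects the actions; this is the standard fact that in the category of $\S$-sets (a presheaf-type category) the isomorphisms are exactly the bijective morphisms, and I would state it explicitly since it is the only nonformal ingredient. Everything else is a restriction of previously established bijections to complementary subsets of their domain and codomain.
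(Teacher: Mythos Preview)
Your proposal is correct and matches the paper's intended reasoning: the paper gives no explicit proof, treating the corollary as immediate from the faithfulness convention (so each $\phi_\X$ is injective, hence bijective) together with the preceding bijection $k\colon \Surj(\epsilon_\S,\circ_\S)\to\mathcal{S}(\S)$. Your one extra observation---that a bijective morphism of $\S$-sets is automatically an isomorphism---is the only nontrivial ingredient, and it is good that you flagged it explicitly.
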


All monoids are faithful as the identity element suitably distinguishes all other elements in the sense above. But there are many natural classes of semigroups without identity that are faithful. Although we expect the below result to be well-known, for completeness we provide below a proof that inverse semigroups are faithful. (An introduction to inverse semigroups and their properties can be found in \cite{Clifford1967-V2})

\begin{proposition}
    Let $\X$ be an inverse semigroup. Then $\X$ is faithful.
\end{proposition}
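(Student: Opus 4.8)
The plan is to show that an inverse semigroup $\X = (X,\cdot)$ is faithful by exhibiting, for each $x \in X$, enough information about the left-multiplication map $\phi_\X(x)$ to recover $x$. Recall that in an inverse semigroup every element $x$ has a unique inverse $x\inv$ satisfying $x x\inv x = x$ and $x\inv x x\inv = x\inv$, and the idempotents commute. The key observation is that idempotents of the form $x\inv x$ act as right identities for $x$: indeed $x \cdot (x\inv x) = (x x\inv x) = x$. So if $\phi_\X(x) = \phi_\X(y)$ as functions, then applying both sides to $x\inv x$ gives $x = x \cdot (x\inv x) = y \cdot (x\inv x)$, and symmetrically applying both to $y\inv y$ gives $y = x \cdot (y\inv y)$.

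From here the plan is to close the loop using the commutativity of idempotents. Set $e = x\inv x$ and $f = y\inv y$. We have $x = y e$ and $y = x f$. Substituting, $x = y e = (x f) e = x (f e) = x (e f)$, and then $x = x(ef)$ together with $x = xe$... the cleanest route is to compute $x f = (y e) f = y (e f) = y(f e) = (y f) e$. One would like $yf = y$; but $f = y\inv y$ is a right identity for $y$, so indeed $y f = y$, giving $x f = y e = x$. But $x f = y$ by definition of $f$-step above — wait, more carefully: we established $y = x f$, and we just computed $x f = x$, hence $y = x$. I would present this as a short chain of equalities once the two relations $x = y(x\inv x)$ and $y = x(y\inv y)$ are in hand, being careful with which idempotent is a right identity for which element.

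Alternatively, and perhaps more transparently, I would argue via the general fact (provable in one line from faithfulness-type reasoning, or citable from \cite{Clifford1961-V1}) that a semigroup is faithful if and only if it does not contain a pair of distinct elements $x \neq y$ with $xz = yz$ for all $z$; then it suffices to test $z = x\inv x$ to force $x = y$ as above. Either way, the substantive content is just the identity $x (x\inv x) = x$, which holds in any inverse semigroup.

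The main obstacle, such as it is, is bookkeeping: making sure the right idempotent ($x\inv x$ versus $x x\inv$, and left versus right identity) is used at each step, since an inverse semigroup need not be commutative and $x\inv x$ is only a right identity for $x$ (and a left identity for $x\inv$), not a two-sided identity in general. Once the roles are pinned down, the proof is a two-line calculation; I would write it by evaluating the hypothesised equality $\phi_\X(x) = \phi_\X(y)$ at the single point $x\inv x$ and then symmetrising.
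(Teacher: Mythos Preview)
Your argument is correct once the dust settles: from $xz=yz$ for all $z$ you evaluate at $z=x\inv x$ and $z=y\inv y$ to obtain $x=y(x\inv x)$ and $y=x(y\inv y)$, and then a short computation using commutativity of idempotents gives $x(y\inv y)=(y(x\inv x))(y\inv y)=y((y\inv y)(x\inv x))=(y y\inv y)(x\inv x)=y(x\inv x)=x$, whence $y=x$. The exposition meanders (the ``wait'' and the abandoned substitution), but the underlying proof is a clean two-line calculation as you promised.

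This is a genuinely different route from the paper's. The paper never invokes commutativity of idempotents; instead it uses the hypothesis at $z=x\inv$ and $z=y\inv$ to derive $x\inv y x\inv = x\inv$ and $y\inv x y\inv = y\inv$, takes inverses in the latter to get $y x\inv y = y$, and concludes $x\inv = y\inv$ by \emph{uniqueness of inverses}. Your approach trades uniqueness of inverses for the (equivalent, in a regular semigroup) fact that idempotents commute, and works entirely with the elements $x,y$ rather than their inverses. Both are short; yours is perhaps more direct in that it produces $x=y$ without the detour through $x\inv=y\inv$, while the paper's version has the virtue of making the role of the inverse-semigroup axiom (unique inverses) completely explicit.
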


\begin{proof}
    Let $x,y \in \X$ be such that for all $z \in \X$, $xz = yz$. We will show that $x$ and $y$ must be the same element by proving that $x\inv = y\inv$. By assumption we have that $x\inv y x\inv = x\inv x x\inv = x\inv$. A similar argument gives that $y\inv x y\inv = y\inv$. Taking inverses on both sides of the latter equation yields $y x\inv y = y$, which together with the former equation proves that $x\inv = y\inv$ as required.
\end{proof}

When we consider unrepresentations of faithful semigroups we can endow this set with the structure of a heap.

\begin{definition}
    A heap is a pair $(X,t)$ where $X$ is a non-empty set and $t\colon X \times X \times X \to X$ is a ternary operations satisfying the following equations.
        \begin{enumerate}
            \item $t(x,x,y) = y = t(y,x,x)$,
            \item $t(v,w,t(x,y,z)) = t(t(v,w,x),y,z)$.
    \end{enumerate}
    \end{definition}

The latter condition is something like an associativity condition, while the former is something like requiring inverse to exist. Indeed a heap is like a group that has forgotten its identity element. Below we demonstrate that the set of isomorphisms between two objects naturally forms a heap, though this result is known.

\begin{proposition}
    Let $\S$ be a transformation semigroup over some set $X$. Then $(\Iso(\epsilon,\circ),t)$ where $t(f,g,h) = f \circ g\inv h$ is a heap.
\end{proposition}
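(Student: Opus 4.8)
The statement is a special case of the general fact that, in any category, the isomorphisms from one fixed object to another form a heap under $t(f,g,h)=f\circ g\inv\circ h$; here the relevant category is that of $\S$-actions, in which $\epsilon$ and $\circ$ are two particular objects. The plan is thus to (1)~check that $t$ is well-defined, i.e.\ that $f\circ g\inv\circ h\in\Iso(\epsilon,\circ)$ whenever $f,g,h$ are, and (2)~verify the two heap axioms directly from the formula. Throughout I assume $\Iso(\epsilon,\circ)\neq\emptyset$, as required by the definition of a heap; if it is empty the assertion is vacuous (or one adopts the convention permitting the empty heap).

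For step~(1) I would use that morphisms of $\S$-actions are closed under composition and include the identities, together with the one genuinely substantive point: the set-theoretic inverse of an isomorphism of $\S$-actions is again a morphism of $\S$-actions, with source and target interchanged. Concretely, if $g\colon X\to S$ is a bijection satisfying $g(s(x))=s\circ g(x)$ for all $s\in S$ and $x\in X$, then for $u\in S$ put $x=g\inv(u)$; from $g(s(x))=s\circ g(x)=s\circ u$ and applying $g\inv$ we get $g\inv(s\circ u)=s(x)=s\bigl(g\inv(u)\bigr)$, so $g\inv\colon S\to X$ is a morphism from $\circ$ to $\epsilon$. Then for $f,g,h\in\Iso(\epsilon,\circ)$ the composite
\[
X\xrightarrow{\,h\,}S\xrightarrow{\,g\inv\,}X\xrightarrow{\,f\,}S
\]
is a composite of action morphisms of types $\epsilon\to\circ$, $\circ\to\epsilon$, $\epsilon\to\circ$, hence an action morphism $\epsilon\to\circ$, and it is bijective since each factor is; so $t(f,g,h)\in\Iso(\epsilon,\circ)$.

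For step~(2), since each element of $\Iso(\epsilon,\circ)$ is a bijection $X\to S$, we have $x\circ x\inv=\id_S$ and $x\inv\circ x=\id_X$, whence $t(x,x,y)=x\circ x\inv\circ y=y$ and $t(y,x,x)=y\circ x\inv\circ x=y$, which is the first heap axiom. The second,
\[
t(v,w,t(x,y,z))=v\circ w\inv\circ x\circ y\inv\circ z=t(t(v,w,x),y,z),
\]
is nothing but associativity of composition of functions. This completes the argument.

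The only part that is not purely formal is the well-definedness in step~(1), and within that the single nontrivial claim is that the inverse of an $\S$-action isomorphism automatically intertwines the two actions. I do not expect any real obstacle beyond keeping careful track of which morphisms go $\epsilon\to\circ$ and which go $\circ\to\epsilon$, so that the composite $f\circ g\inv\circ h$ really does have source $\epsilon$ and target $\circ$.
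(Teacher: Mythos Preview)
Your proof is correct. The paper, however, does not actually supply a proof of this proposition: it prefaces the statement with ``this result is known'' and immediately passes to the corollary, so there is no argument against which to compare yours in detail. What you have written is the standard verification---closure of $\Iso(\epsilon,\circ)$ under $t$ via the fact that inverses of action isomorphisms are again action morphisms, followed by a direct check of the heap identities from associativity of composition and $g\circ g\inv=\id$, $g\inv\circ g=\id$---and it fills in exactly what the paper leaves implicit. Your handling of the non-emptiness hypothesis is also appropriate given the paper's definition of a heap.
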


\begin{corollary}
    Let $\S$ be a transformation semigroup over some set $X$. Then the set $\mathcal{S}(\S)$ of unrepresentations can be made into a heap $(\mathcal{S}(\S),t)$ where if $\X_1 = (X,\cdot^1), \X_2 = (X,\cdot^2)$ and $\X_3 = (X,\cdot^3)$ are three unrepresentations of $S$, then $t(\X_1,\X_2,\X_3)$ is the unrepresentation with multiplication given by \[x\cdot y = \phi_{\X_2}\inv\phi_{\X_3}(x) \cdot^1 y.\]
\end{corollary}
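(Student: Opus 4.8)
The plan is to transport the heap structure on $\Iso(\epsilon_\S,\circ_\S)$ supplied by the preceding proposition along the bijection $k\colon\Iso(\epsilon_\S,\circ_\S)\to\mathcal{S}(\S)$ established earlier, and then to verify that the ternary operation so obtained is the one displayed in the statement. As the definition of a heap requires a non-empty underlying set, we take for granted, here as elsewhere, that $\mathcal{S}(\S)\neq\emptyset$, equivalently $\Iso(\epsilon_\S,\circ_\S)\neq\emptyset$.

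First I would record the purely formal fact that a heap structure transports along any bijection: if $(A,t_A)$ is a heap and $b\colon A\to B$ is a bijection, then $(B,t_B)$ with $t_B(u,v,w)=b\bigl(t_A(b\inv u,b\inv v,b\inv w)\bigr)$ is again a heap, since the heap axioms are equational and survive the relabelling by $b$ unchanged. Applying this with $(A,t_A)=(\Iso(\epsilon_\S,\circ_\S),t)$, $t(f,g,h)=f\circ g\inv\circ h$, and $b=k$, whose inverse $\ell$ sends $\X$ to $\phi_\X$, equips $\mathcal{S}(\S)$ with the heap operation
\[
t(\X_1,\X_2,\X_3)=k\bigl(\phi_{\X_1}\circ\phi_{\X_2}\inv\circ\phi_{\X_3}\bigr).
\]

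It then remains to unwind this expression. Each $\phi_{\X_i}$ is an isomorphism of $\S$-actions, hence a bijection $X\to S$, so $\phi_{\X_2}\inv$ is defined and $\phi_{\X_1}\circ\phi_{\X_2}\inv\circ\phi_{\X_3}$ is again a morphism in $\Iso(\epsilon_\S,\circ_\S)$, to which $k$ therefore applies. By the formula defining $k$, the semigroup $t(\X_1,\X_2,\X_3)$ carries the multiplication
\[
x\cdot y=\bigl(\phi_{\X_1}\circ\phi_{\X_2}\inv\circ\phi_{\X_3}\bigr)(x)(y)=\phi_{\X_1}\bigl(\phi_{\X_2}\inv(\phi_{\X_3}(x))\bigr)(y).
\]
Since $\phi_{\X_1}$ is the representing map of $\X_1=(X,\cdot^1)$, we have $\phi_{\X_1}(w)(y)=w\cdot^1 y$ for all $w,y\in X$; substituting $w=\phi_{\X_2}\inv(\phi_{\X_3}(x))$ gives $x\cdot y=\phi_{\X_2}\inv\phi_{\X_3}(x)\cdot^1 y$, as claimed.

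Each step is merely a citation of an earlier result or a substitution, so there is no serious obstacle. The two points worth a moment's care are confirming that the non-emptiness clause in the definition of a heap is in force, and getting the direction of the transport right --- pushing forward along $k$ rather than $\ell$ --- so that the bookkeeping with $\phi_{\X_2}\inv$ reproduces precisely the displayed formula instead of a variant with $\X_1$ and $\X_3$ interchanged.
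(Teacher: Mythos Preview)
Your proposal is correct and follows exactly the route the paper intends: the corollary is stated without proof, immediately after the proposition that $(\Iso(\epsilon_\S,\circ_\S),t)$ is a heap and after the bijection $k\colon\Iso(\epsilon_\S,\circ_\S)\to\mathcal{S}(\S)$ has been established, so the implicit argument is precisely to transport the heap structure along $k$ and then unpack the resulting multiplication via $\phi_{\X_1}(w)(y)=w\cdot^1 y$. Your care about the non-emptiness hypothesis is well placed, since the paper's own definition of a heap requires it.
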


We can learn a lot about the structure of this heap by studying the semigroup $\S$ itself. For instance by fixing an isomorphism $e \in \Iso(\epsilon,\circ)$ we can imbue $\Iso(\epsilon,\circ)$ with a group structure $f \cdot g = f e\inv g$ and identity $e$. The resulting group is isomorphic to $\Aut(\epsilon)$ via the mapping $\phi\colon \Aut(\epsilon) \to \Iso(\epsilon,\circ), f \mapsto ef$.

\begin{theorem}
    Let $\S$ be a transformation semigroup over the set $X$. Then when the set of unrepresentations is non-empty, the unrepresentations of $\S$ form a group isomorphic to the invertible elements of the centralizer of $\S$ in $\End(X)$.
\end{theorem}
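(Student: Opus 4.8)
The plan is to chain together the bijections already in hand and then make the target concrete. Since $\mathcal{S}(\S)$ is assumed non-empty, the bijection $k\colon\Iso(\epsilon_\S,\circ_\S)\to\mathcal{S}(\S)$ of the corollary just above shows that $\Iso(\epsilon_\S,\circ_\S)$ is non-empty too, so we may fix an element $e$ of it. By the observation recorded immediately before the theorem, $(\Iso(\epsilon_\S,\circ_\S),\cdot)$ with $f\cdot g=f\circ e\inv\circ g$ is a group with unit $e$, and $f\mapsto e\circ f$ is a group isomorphism $\Aut(\epsilon_\S)\to\Iso(\epsilon_\S,\circ_\S)$. Transporting the group operation from $\Iso(\epsilon_\S,\circ_\S)$ to $\mathcal{S}(\S)$ along the bijection $k$ therefore turns $\mathcal{S}(\S)$ into a group isomorphic to $\Aut(\epsilon_\S)$; concretely, the induced operation on unrepresentations is the one read off from the heap operation of the preceding corollary by declaring the chosen unrepresentation to be the identity.

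So everything reduces to identifying $\Aut(\epsilon_\S)$ with the group of invertible elements of the centralizer of $\S$ in $\End(X)$. Unravelling the definition of a morphism of $\S$-actions, a self-map $g\colon X\to X$ is an endomorphism of $\epsilon_\S$ exactly when $g(f(x))=f(g(x))$ for all $f\in\S$ and $x\in X$, i.e. exactly when $g\circ f=f\circ g$ in $\End(X)$ for every $f\in\S$; thus the endomorphism monoid of $\epsilon_\S$ is precisely the centralizer of $\S$ in $\End(X)$, and $\Aut(\epsilon_\S)$ is its group of invertible elements. The one point needing a word is that an automorphism of $\epsilon_\S$ is the same thing as a bijection lying in the centralizer: if $g$ is a bijection with $g\circ f=f\circ g$ for all $f\in\S$, then conjugating by $g\inv$ gives $f\circ g\inv=g\inv\circ f$, so $g\inv$ also lies in the centralizer and $g$ is indeed invertible there.

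I expect no serious obstacle: the content is essentially bookkeeping, since the substantive work — the bijection $\mathcal{S}(\S)\cong\Iso(\epsilon_\S,\circ_\S)$, the heap structure, and the passage from a heap to a group once a base point is fixed — has already been carried out above. The one genuine caveat, worth flagging in the statement or a remark, is that the group structure on $\mathcal{S}(\S)$ is not canonical: it depends on which unrepresentation is singled out to serve as the identity. Different choices yield isomorphic groups, all isomorphic to $\Aut(\epsilon_\S)$, so the isomorphism type asserted by the theorem is well defined even though the group itself is only pinned down by that choice (equivalently, $\mathcal{S}(\S)$ is canonically a torsor under the invertible elements of the centralizer).
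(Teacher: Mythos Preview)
Your proposal is correct and follows essentially the same route as the paper's proof: fix a base point to turn the heap $\Iso(\epsilon_\S,\circ_\S)$ into a group, use the isomorphism with $\Aut(\epsilon_\S)$, and then unpack the latter as the invertible elements of the centralizer of $\S$ in $\End(X)$. Your version is in fact slightly more careful than the paper's --- you explicitly transport the structure along $k$, verify that the inverse of a commuting bijection again commutes, and flag the non-canonicity of the group structure --- but none of this constitutes a different argument.
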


\begin{proof}
    We have established that the set of unrepresentations forms a group structure on $\Iso(\epsilon,\circ)$ after making a choice of identity. Moreover we know that $\Iso(\epsilon,\circ) \simeq \Aut(\epsilon)$. It remains to simply unpack the definition of an element of $\Aut(\epsilon)$.

    $$\xymatrix@=50pt{
\S\times X\ar[r]^-{1_S\times \phi}\ar[d]_-{\varepsilon_\S} & \S\times X\ar[d]^-{\epsilon_\S}\\
X\ar[r]_-{\phi} & X}$$

The above diagram commutes exactly when $\phi \circ f = f \circ \phi$ for all $f \in \S$. We see that such an $f$ will be precisely an invertible element of $C_{\End(X)}(\S)$ as required.
\end{proof}

In a similar vein we can establish that $\Iso(\epsilon,\circ) \simeq \Aut(\circ)$ when the set of unrepresentations is inhabited. The elements of this latter group are functions $\alpha\colon \S \to \S$ satisfying that $\alpha(f\circ g) = f \circ \alpha(g)$. This motivates the following definition.

\begin{definition}
    Let $\S$ be a semigroup. A pseudounit of $S$ is a bijective function $\alpha\colon \S \to \S$ satisfying that for all $x,y \in \S$, $\alpha(xy) = x\alpha(y)$.
\end{definition}

\begin{corollary}
    Let $\S$ be a a semigroup. The set $P(\S)$ of pseudounits forms a group with respect to composition.
\end{corollary}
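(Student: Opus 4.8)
The plan is to verify the group axioms for $(P(\S),\circ)$ directly, viewing the pseudounits as a distinguished subset of the group of all bijections of the underlying set of $\S$. Associativity is immediate because composition of functions is associative, so the real work is closure under composition, the presence of an identity, and closure under inversion.

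First I would observe that $\id_\S$ is a pseudounit: it is a bijection and $\id_\S(xy) = xy = x\,\id_\S(y)$. For closure, given pseudounits $\alpha,\beta$, the composite $\alpha\circ\beta$ is again a bijection, and $(\alpha\circ\beta)(xy) = \alpha\big(x\,\beta(y)\big) = x\,\alpha(\beta(y)) = x\,(\alpha\circ\beta)(y)$, so $\alpha\circ\beta\in P(\S)$.

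The only step that needs a genuine (if brief) argument is that $\alpha\inv\in P(\S)$ whenever $\alpha\in P(\S)$. The inverse map $\alpha\inv$ is certainly a bijection; to check the pseudounit identity, apply $\alpha$ to $x\,\alpha\inv(y)$ and use the identity for $\alpha$: we obtain $\alpha\big(x\,\alpha\inv(y)\big) = x\,\alpha(\alpha\inv(y)) = xy = \alpha\big(\alpha\inv(xy)\big)$, and injectivity of $\alpha$ then forces $x\,\alpha\inv(y) = \alpha\inv(xy)$. This is exactly the pseudounit condition for $\alpha\inv$, so the group axioms are all satisfied.

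I do not anticipate a real obstacle here; the proof is routine verification, and the only point to be careful about is that the definition of pseudounit incorporates bijectivity, so at each step one must note (trivially) that the relevant map is a bijection in addition to satisfying the algebraic identity. As an alternative, one could observe that $P(\S)$ coincides with the automorphism group of the left regular action of $\S$ on its own underlying set — whose endomorphisms $\alpha$ are precisely the maps with $\alpha(sx) = s\,\alpha(x)$ — and invoke the general fact that automorphism groups of objects in a category form a group; but the direct argument above is self-contained and marginally shorter.
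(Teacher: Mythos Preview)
Your direct verification is correct and complete. The paper, however, states this as a corollary without proof: the implicit argument is the observation made just before the definition, that a pseudounit is exactly a bijective endomorphism of the left regular $\S$-action $\circ_\S$, so $P(\S)=\Aut(\circ_\S)$ and hence is automatically a group. This is precisely the alternative you sketch in your final paragraph. Your elementary verification has the advantage of being self-contained and of applying to an abstract semigroup $\S$ without needing to invoke the action framework set up earlier in the paper; the paper's route, on the other hand, explains why the result earns the label ``corollary'' rather than ``proposition''.
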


\subsection{Existence of Unrepresentations}
We now have a number of results concerning the algebraic structure of unrepresentations when some unrepresentations exist. It is natural then to ask about necessary and sufficient conditions on a transformation semigroup $\S$, to ensure that an unrepresentation exists.

One simple condition is the following.

\begin{proposition}
    A transformation semigroup $\S$ over some set $X$ has an unrepresentation only if $|\S| = |X|$.
\end{proposition}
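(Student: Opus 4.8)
The plan is to read the conclusion off the correspondence established above between $\mathcal{S}(\S)$ and $\Iso(\epsilon_\S,\circ_\S)$, together with the standing convention that ``representation'' means ``faithful representation''. The key observation is that an unrepresentation of $\S$ is not merely a semigroup whose representing map \emph{surjects} onto the underlying set of $\S$, but one whose representing map is injective as well, hence a bijection between $X$ and that underlying set.

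Concretely, I would argue as follows. Suppose $\mathcal{S}(\S)\neq\varnothing$ and fix an unrepresentation $\X=(X,\cdot)$. By definition the representation of $\X$ is the image $\phi_\X(X)$, so $\phi_\X\colon X\to S$ is surjective; and since $\X$ is faithful, $\phi_\X$ is injective. Therefore $\phi_\X$ is a bijection of $X$ onto the underlying set of $\S$, and consequently $|X|=|\S|$. Equivalently, and without even unpacking $\phi_\X$: by the preceding corollary the non-emptiness of $\mathcal{S}(\S)$ forces $\Iso(\epsilon_\S,\circ_\S)\neq\varnothing$, and any isomorphism of $\S$-actions from $\epsilon_\S$ to $\circ_\S$ is in particular a bijection of the underlying sets $X$ and $S$.

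I do not expect a genuine obstacle here; the only point requiring care is bookkeeping — keeping $X$ (the set being acted on) notationally distinct from the underlying set of $\S$ (a set of transformations of $X$), and remembering to invoke faithfulness, since without it a non-injective representing map would only yield the inequality $|\S|\le|X|$. It may be worth appending a sentence noting that this necessary condition is far from sufficient, which is precisely what the remainder of the subsection is devoted to.
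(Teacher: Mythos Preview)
Your argument is correct and is essentially the same as the paper's: an unrepresentation furnishes a bijection (the representing map of a faithful semigroup) between $X$ and the underlying set of $\S$, forcing $|X|=|\S|$. The paper simply states this in one line, while you unpack the surjectivity/injectivity and additionally phrase it via $\Iso(\epsilon_\S,\circ_\S)\neq\varnothing$, but there is no substantive difference.
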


\begin{proof}
    An unrepresentation equips $X$ with a multiplication making it isomorphic to $\S$, hence as sets they must be the same size.
\end{proof}

As expected, this condition is not sufficient to guarantee unrepresentation as can be seen in the following counterexample.
\begin{example}
    Let $X=\{1,2,3,4\}$ and consider the following set of functions from $X$ to $X$.
    \begin{center}
        $S= \begin{cases}
        f(1,2)=1, &f(3,4)=3\\
        g(1,2)=2, &g(3,4)=3\\
        h(1,2)=1, &h(3,4)=4\\
        j(1,2)=2, &j(3,4)=4
    \end{cases}$
    \end{center}
    One can easily see that for all $m,n\in S$, $m\circ n=m$ and so $\S$ is a faithful transformation semigroup of $X$ with 4 elements (it is a left-zero band). We will now prove that $\S$ has no unrepresentations, assume that we have an unrepresentation and consider the unrepresenting map $\phi\colon X\to S$. Then
    $$\phi(1)=\phi(f(1))=f\circ\phi(1)=f$$
    but then similarly, $\phi(1)$ should equal $h \ne f$. This contradiction demonstrates that $\S$ has no unrepresentations.
\end{example}

One can use the technique in this example to show that a left-zero band of transformations of $X$ has an unrepresentation if and only if it is the semigroup consisting of all constant functions on $X$. In that case, the unrepresentation will be the left-zero band on $X$.

In general the existence of an unrepresentation depends not only on the semigroup $\S$ but also on how exactly it is embedded into $\End(X)$ and so, there is no simple way to determine whether an unrepresentation exists. There is however at least one other instance in which knowledge of the structure of the semigroup $\S$ is enough to deduce the existence of an unrepresentation, without any reference to an embedding.

\begin{proposition}
    Let $\S = \{p^n:n \in \mathbb{Z}\}$ be a cyclic transformation group over the set $X$, such that the permutation $p$ comprises of a single cycle. Then $\S$ has an unrepresentation.
\end{proposition}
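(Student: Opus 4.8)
The plan is to exhibit a concrete unrepresentation by recognising $X$ as a torsor over the group $\S$. The hypothesis that $p$ consists of a single cycle means precisely that the action $\epsilon_\S$ of $\S$ on $X$ is transitive (the orbit of any point is all of $X$) and free (if $p^k$ fixes a point then $p^k=\id$). In the finite case this forces $\S\cong\Z/n\Z$ with $n=|X|$, and in the infinite case $p$ has infinite order so $\S\cong\Z$ and $|X|=|\S|=\aleph_0$; in either case freeness is exactly the assertion that the orbit map $s\mapsto s(x_0)$ at a chosen basepoint $x_0\in X$ is injective. Combined with transitivity, this orbit map is a bijection $\S\to X$, and I take $\phi\colon X\to\S$ to be its inverse, so that $\phi(x)$ is the unique $s\in\S$ with $s(x_0)=x$.

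Next I would verify that $\phi$ is a morphism of $\S$-actions from $\epsilon_\S$ to $\circ_\S$, i.e. that $\phi(f(x))=f\circ\phi(x)$ for all $f\in\S$ and $x\in X$. Since $\S$ is a group it is closed under composition, so $f\circ\phi(x)$ again lies in $\S$; evaluating it at $x_0$ gives $f(\phi(x)(x_0))=f(x)$, which is also the value at $x_0$ of $\phi(f(x))$. By freeness (uniqueness of the element of $\S$ carrying $x_0$ to $f(x)$) the two agree, so $\phi\in\Iso(\epsilon_\S,\circ_\S)$.

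Finally, applying the bijection between $\Iso(\epsilon_\S,\circ_\S)$ and $\mathcal S(\S)$ established above — equivalently, Proposition~\ref{prp:actmul} — the operation $x\cdot y=\phi(x)(y)$ makes $(X,\cdot)$ a semigroup, and since $\phi$ is surjective it is the representing map of $(X,\cdot)$, whence the representation of $(X,\cdot)$ is exactly $\phi(X)=\S$. Thus $(X,\cdot)$ is an unrepresentation of $\S$. I do not anticipate a genuine obstacle here: the one point demanding care is the freeness of $\epsilon_\S$ — that a single cycle $p$ satisfies $p^k(x_0)=x_0\Rightarrow p^k=\id$ — which must be argued uniformly for the finite $n$-cycle and the bi-infinite cycle; everything else is the standard fact that a $G$-torsor carries a group structure isomorphic to $G$, transcribed into the language of $\S$-actions.
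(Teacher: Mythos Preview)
Your proposal is correct and takes essentially the same approach as the paper: both fix a basepoint (your $x_0$, the paper's $z$), define $\phi$ as the inverse of the orbit map $s\mapsto s(x_0)$, and verify the $\S$-equivariance $\phi(f(x))=f\circ\phi(x)$ directly. Your explicit invocation of freeness is in fact slightly cleaner than the paper's ``smallest positive $k$'' bookkeeping, since it transparently handles the well-definedness issue that the paper's calculation $\phi(p^{n+k}(z))=p^{n+k}$ quietly relies on.
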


\begin{proof}
    To define an action we simply choose some distinguished element $z \in X$. Since $p$ comprises of a single cycle, for all $x \in X$ there exists some smallest positive $k$ such that $x = p^k(z)$. Now we simply define the function $\phi\colon X \to S$ to send $p^k(z) \mapsto p^k$.

    To show that $\phi$ defines an actions and consequently determines an unrepresentation of $S$ we must demonstrate that $\phi(p^n(x)) = p^n \circ \phi(x)$. Let $k\in \mathbb{Z}$ be the smallest positive $k$ such that $x = p^k(z)$. Then we have the following.

    \begin{align*}
        \phi(p^n(x))    &= \phi(p^n(p^k(z))) \\
                        &= \phi(p^{n+k}(z)) \\
                        &= p^{n+k} \\
                        &= p^n \circ p^k \\
                        &= p^n \circ \phi(x).
    \end{align*}

    It is easy to see that $\phi$ is both injective and surjective and so $\phi$ is an isomorphism of actions and hence corresponds to an unrepresentation.
\end{proof}

\section{Classes of Faithful Semigroups}

In this section we regard specific instances of faithful semigroups and observe what simplifications occur in these settings.

\subsection{Groups and Monoids}

The following result about pseudounits for monoids establishes that pseudounits were aptly named.

\begin{proposition}
    Let $\textbf{M}$ be a monoid. The group of pseudounits $P(\textbf{M})$ is dually isomorphic to the group of invertible elements of $\textrm{Inv}(\textbf{M})$.
\end{proposition}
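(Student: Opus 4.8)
The plan is to identify the pseudounits of $\M$ explicitly as right translations by units, and to check that this identification is a dual isomorphism of groups. Throughout write $\rho_a\colon \M\to\M$ for the right translation $x\mapsto xa$.

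First I would pin down the shape of an arbitrary pseudounit. If $\alpha\colon\M\to\M$ is bijective with $\alpha(xy)=x\alpha(y)$ for all $x,y$, then evaluating at $y=1$ gives $\alpha(x)=x\,\alpha(1)$ for every $x\in\M$. Hence $\alpha=\rho_a$ where $a:=\alpha(1)$, so every pseudounit is a right translation and is determined by the single element $a$.

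Next I would show that $a$ is forced to be a unit, and conversely that every right translation by a unit is a pseudounit. Since $\alpha=\rho_a$ is surjective there is $b\in\M$ with $ba=\rho_a(b)=1$; then $(ab)a=a(ba)=a=1\cdot a$, and injectivity of $\rho_a$ yields $ab=1$, so $a\in\Inv(\M)$. Conversely, for $a\in\Inv(\M)$ the map $\rho_a$ is bijective with inverse $\rho_{a\inv}$ and satisfies $\rho_a(xy)=(xy)a=x(ya)=x\,\rho_a(y)$, so it is a pseudounit. This establishes a bijection $\Psi\colon \Inv(\M)\to P(\M)$, $a\mapsto\rho_a$.

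Finally I would check that $\Psi$ reverses products: $\rho_b\circ\rho_a=\rho_{ab}$ since $(\rho_b\circ\rho_a)(x)=\rho_b(xa)=(xa)b=x(ab)$, so $\Psi(ab)=\rho_{ab}=\rho_b\circ\rho_a=\Psi(b)\circ\Psi(a)$. Thus $\Psi$ is a group anti-isomorphism, i.e. $P(\M)$ is dually isomorphic to $\Inv(\M)$, as claimed (it also sends the unit $1$ to $\rho_1=\id_\M$, the identity of $P(\M)$). The only mildly delicate point is upgrading the one-sided inverse $b$ obtained from surjectivity to a genuine two-sided inverse; this is exactly where injectivity of the pseudounit is used, via the cancellation $(ab)a=1\cdot a$. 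Everything else is a routine verification.
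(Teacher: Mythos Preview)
Your proof is correct and follows essentially the same route as the paper: both identify each pseudounit $\alpha$ with the right translation $\rho_{\alpha(1)}$, show that $\alpha(1)$ must be invertible, and verify that the resulting correspondence reverses multiplication. The only cosmetic differences are that the paper defines the bijection in the opposite direction ($\alpha\mapsto\alpha(1)$) and deduces invertibility of $\alpha(1)$ from the inverse pseudounit $\alpha^{-1}$ rather than directly from surjectivity and injectivity of $\alpha$ as you do.
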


\begin{proof}
    Notice that a pseudounit $\alpha$ is uniquely determined by where it sends $1$ since $\alpha(x) = x\alpha(1)$. Since $\alpha$ necessarily has an inverse $\alpha\inv$ it must be that $\alpha(1)$ is an invertible element as we need $\alpha(1)\alpha\inv(1) = \alpha\inv(\alpha(1)1) = \alpha\inv(\alpha(1)) = 1$.

    This defines an assignment $k \colon P(\textbf{M}) \to \mathrm{Inv}(\textbf{M})$ sending $\alpha$ to $\alpha(1)$. Note that $k(\alpha \beta) = \alpha(\beta(1)) =  \alpha(\beta(1)1) = \beta(1) \alpha(1)=k(\beta)k(\alpha)$.

    It remains to show that $k$ is bijective. It is clear that $k$ is injective and to see that it is surjective we need only establish that if $x \in \mathrm{Inv}(\textbf{M})$ then $\alpha(y) = yx$ defines a pseudounit. It is clear the identity is satisfied and because $x$ is invertible it is bijective. Hence $k$ must be surjective, completing the proof.
\end{proof}

\begin{corollary}\label{cor:invgrp}
    Let $\textbf{M}$ be a transformation monoid over the set $X$. The group of unrepresentations $\mathcal{S}(\textbf{M})$ is isomorphic to $\mathrm{Inv}(\textbf{M})$, the group of invertible elements of $\mathbf{M}$ when it is non-empty.
\end{corollary}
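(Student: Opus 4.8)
The plan is to obtain the isomorphism by composing three facts, two of which are already available: that whenever $\mathcal{S}(\M)$ is non-empty it carries a group structure isomorphic to the pseudounit group $P(\M)$; that $P(\M)$ is dually isomorphic to $\mathrm{Inv}(\M)$, which is precisely the proposition just proved; and that a dual isomorphism between two groups is in particular an isomorphism, since inversion is an isomorphism $G \to G\op$ for every group $G$. Chaining these gives $\mathcal{S}(\M) \simeq P(\M) \simeq \mathrm{Inv}(\M)\op \simeq \mathrm{Inv}(\M)$.

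For the first fact I would argue as follows. By the corollary identifying $\mathcal{S}(\M)$ with $\Iso(\epsilon,\circ)$, and since $\mathcal{S}(\M)$ is assumed non-empty, we may fix an isomorphism $e \in \Iso(\epsilon,\circ)$ and transport the group law $f\cdot g = f\circ e\inv\circ g$ to $\mathcal{S}(\M)$. As recorded after the theorem on the group of unrepresentations, $\alpha\mapsto \alpha\circ e$ is a bijection $\Aut(\circ)\to\Iso(\epsilon,\circ)$, and a short direct check shows it intertwines composition with $\cdot$; hence the transported group structure on $\mathcal{S}(\M)$ is isomorphic to $(\Aut(\circ),\circ)$. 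But $\Aut(\circ)$ is, by definition, the set of bijections $\alpha\colon\M\to\M$ with $\alpha(f\circ g)=f\circ\alpha(g)$, which is exactly the group $P(\M)$ of pseudounits of $\M$. Thus $\mathcal{S}(\M)\simeq P(\M)$.

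It then remains to invoke the preceding proposition, which supplies a dual isomorphism $P(\M)\to\mathrm{Inv}(\M)$ given by $\alpha\mapsto\alpha(1)$, and to compose with the inversion isomorphism $\mathrm{Inv}(\M)\op\to\mathrm{Inv}(\M)$. There is no genuine obstacle here — the statement really is a corollary — and the only steps needing care are bookkeeping ones: confirming that the passage from the heap $\mathcal{S}(\M)$ to a group via an arbitrary choice of identity is well-defined up to isomorphism (it is, since all the resulting groups are isomorphic to the choice-free group $P(\M)$), and tracking the op/dual-isomorphism so as not to overclaim. As an alternative route that bypasses pseudounits, one could apply the theorem directly: a chosen unrepresentation identifies $\M$ with the representation of a genuine monoid $(X,\cdot)$; the centralizer of $\M$ in $\End(X)$ is then exactly the set of right translations $y\mapsto ya$; the invertible such translations correspond bijectively to the invertible elements $a$ of $\M$; and $a\mapsto(y\mapsto ya)$ is an anti-isomorphism $\mathrm{Inv}(\M)\to\mathrm{Inv}(C_{\End(X)}(\M))$, so that the theorem again yields $\mathcal{S}(\M)\simeq\mathrm{Inv}(\M)\op\simeq\mathrm{Inv}(\M)$.
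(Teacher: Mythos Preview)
Your proposal is correct and follows precisely the route the paper intends: the corollary is stated without proof because it is meant to follow immediately from the preceding proposition ($P(\M)\simeq\mathrm{Inv}(\M)\op$) together with the earlier identification $\mathcal{S}(\M)\simeq\Aut(\circ)=P(\M)$ and the standard fact that every group is isomorphic to its opposite via inversion. Your alternative centralizer argument is also sound, but it is not the derivation the paper has in mind here.
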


\begin{corollary}
    Let $\textbf{G}$ be a transformation group over the set $X$. The group of unrepresentations $\mathcal{S}(\textbf{G})$ is isomorphic to $\textbf{G}$ when it is non-empty.
\end{corollary}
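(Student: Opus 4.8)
The plan is to read this off Corollary \ref{cor:invgrp}, since a transformation group is a particularly nice transformation monoid. First I would record the observation that a transformation group over $X$ — by which I mean a subgroup $\mathbf{G}$ of the group of permutations of $X$, regarded inside $(\End(X),\circ)$ — is in particular a transformation monoid: it contains the identity transformation $\id_X$, which serves as its monoid identity. Hence Corollary \ref{cor:invgrp} applies to $\mathbf{G}$ and tells us that whenever $\mathcal{S}(\mathbf{G})$ is non-empty it is isomorphic as a group to $\mathrm{Inv}(\mathbf{G})$, the group of elements of $\mathbf{G}$ that are invertible in the monoid $\mathbf{G}$.

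Second, I would identify $\mathrm{Inv}(\mathbf{G})$. Since $\mathbf{G}$ is already a group, every $g \in \mathbf{G}$ has a two-sided inverse $g\inv \in \mathbf{G}$ with $g \circ g\inv = g\inv \circ g = \id_X$, so every element of $\mathbf{G}$ is invertible in the monoid $\mathbf{G}$ and $\mathrm{Inv}(\mathbf{G}) = \mathbf{G}$ as groups. Composing the two isomorphisms yields $\mathcal{S}(\mathbf{G}) \cong \mathbf{G}$ whenever $\mathcal{S}(\mathbf{G})$ is inhabited, which is exactly the assertion.

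I do not expect any genuine obstacle; this is essentially a one-line specialisation of the monoid case. The only point that warrants a moment's attention is the bookkeeping that a transformation group really sits inside $\End(X)$ as a submonoid, i.e.\ that its group identity coincides with $\id_X$ — this is automatic under the standing convention that a transformation group means a group of permutations of $X$. (If one instead read ``transformation group'' loosely as any subsemigroup of $\End(X)$ that is abstractly a group, its identity could differ from $\id_X$ and Corollary \ref{cor:invgrp} would not apply verbatim; but then $|\mathbf{G}|$ need not equal $|X|$, so by the cardinality criterion $\mathcal{S}(\mathbf{G})$ would be empty and the statement holds vacuously. I would at most relegate this to a remark.)
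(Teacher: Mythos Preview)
Your proposal is correct and matches the paper's approach: the paper states this corollary without proof, treating it as immediate from Corollary~\ref{cor:invgrp} via the observation that $\mathrm{Inv}(\mathbf{G}) = \mathbf{G}$ when $\mathbf{G}$ is a group. Your parenthetical about the identity of $\mathbf{G}$ coinciding with $\id_X$ is a reasonable clarification but not something the paper dwells on.
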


\begin{corollary}
    Let $G = \{p^n:n \in \mathbb{Z}\}$ be a cyclic transformation group over the set $X$, such that the permutation $p$ comprises of a single cycle. Then the group of unrepresenations is cyclic.
\end{corollary}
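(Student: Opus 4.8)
The plan is to chain together the two facts the paper has already established for cyclic transformation groups, so the argument is short. First I would note that the hypothesis ``$p$ comprises a single cycle'' is \emph{exactly} the hypothesis of the earlier Proposition guaranteeing that a cyclic transformation group $\S = \{p^n : n \in \Z\}$ of this form admits an unrepresentation; hence $\mathcal{S}(G)$ is non-empty, and we are entitled to use the results that assume inhabitedness.

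Second, with non-emptiness secured, I would invoke the Corollary stating that for a transformation group $\G$ the group of unrepresentations $\mathcal{S}(\G)$ is isomorphic to $\G$ itself. Applying this with $\G = G = \{p^n : n\in\Z\}$ gives $\mathcal{S}(G) \cong G$ as groups. It then remains only to observe that $G$ is, by construction, the cyclic subgroup of $\Aut(X)$ generated by $p$: being a homomorphic image of $\Z$ under $n\mapsto p^n$, it is cyclic (isomorphic to $\Z$ when $p$ has infinite order, i.e.\ a single bi-infinite cycle on an infinite $X$, and to $\Z/n\Z$ when $|X|=n$). Since cyclicity is an isomorphism invariant, $\mathcal{S}(G)$ is cyclic, which is the claim.

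There is no real obstacle here beyond bookkeeping: one must be careful to cite the single-cycle hypothesis for the \emph{existence} step (it is literally the same hypothesis) before applying the isomorphism $\mathcal{S}(G)\cong G$. If an explicit generator of $\mathcal{S}(G)$ is wanted, one can trace it through the isomorphism used in the proof of the earlier Corollary (which factors as $\mathcal{S}(G)\cong \Iso(\epsilon,\circ)\cong\Aut(\epsilon)\cong \mathrm{Inv}(C_{\End(X)}(G))$), under which $p\in G$ corresponds to a generator; but this is optional and requires no new idea.
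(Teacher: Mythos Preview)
Your proposal is correct and matches the paper's intended argument exactly: the corollary is stated without proof because it follows immediately from combining the earlier Proposition (existence of an unrepresentation for single-cycle cyclic transformation groups) with the Corollary that $\mathcal{S}(\mathbf{G})\cong\mathbf{G}$ for transformation groups whenever $\mathcal{S}(\mathbf{G})$ is non-empty. Your write-up spells out precisely these two steps, so there is nothing to add or correct.
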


Another nice simplification in the monoid setting is that in some sense an unrepresentation is entirely determined by what is sent to the identity.

\begin{proposition}\label{prp:monid}
        Any unrepresentation $\phi$ of a transformation monoid $\S$ over some set $X$ is completely determined by which element is mapped to $1$. Specifically for any $f\in \S$ we have
    \[\phi^{-1}(f)=f(\phi^{-1}(1))\]
\end{proposition}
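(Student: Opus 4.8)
The plan is to use directly the identification of $\mathcal{S}(\S)$ with $\Iso(\epsilon_\S,\circ_\S)$ established earlier: an unrepresentation of a transformation monoid $\S$ is nothing but an isomorphism of $\S$-actions $\phi\colon X\to\S$ from $\epsilon_\S$ to $\circ_\S$. Unpacking the commuting square that defines a morphism of $\S$-actions gives the key identity
\[
\phi(f(x)) = f\circ\phi(x)\qquad\text{for all }f\in\S,\ x\in X.
\]
First I would fix notation: let $1\in\S$ denote the identity element of the monoid $\S$, and—since $\phi$ is in particular a bijection—let $z=\phi^{-1}(1)$ be the unique element of $X$ that $\phi$ sends to $1$.

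Next I would specialise the displayed identity to $x=z$, with $f\in\S$ arbitrary, and read off
\[
\phi(f(z)) = f\circ\phi(z) = f\circ 1 = f .
\]
Applying $\phi^{-1}$ to both ends yields $f(z) = \phi^{-1}(f)$, that is, $\phi^{-1}(f)=f\bigl(\phi^{-1}(1)\bigr)$, which is precisely the asserted formula. Since the right-hand side involves $\phi$ only through the single element $\phi^{-1}(1)\in X$, the map $\phi^{-1}$—and therefore $\phi$ itself—is completely determined by that element, giving the first half of the statement.

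I do not anticipate a genuine obstacle here; the argument is a one-line computation. The only points deserving a word of care are that ``$1$'' must be understood as the identity of the monoid $\S$ and that $\phi^{-1}(1)$ is a bona fide element of $X$, which is guaranteed because an unrepresentation is by definition an isomorphism, hence bijective. If desired, one can further note that this identity of $\S$ necessarily coincides with $\id_X$: taking $f=1$ in the displayed identity gives $\phi(1(x))=1\circ\phi(x)=\phi(x)$ for every $x\in X$, and injectivity of $\phi$ forces $1=\id_X$; but this observation is not needed for the statement as phrased.
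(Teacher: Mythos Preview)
Your argument is correct and is essentially the same as the paper's: both use the action-morphism identity $\phi(f(x))=f\circ\phi(x)$ specialised at $x=\phi^{-1}(1)$ and then apply $\phi^{-1}$. The only cosmetic difference is that the paper starts from $f=f\circ 1=f\circ\phi(\phi^{-1}(1))$ and rewrites forward, whereas you start from the general identity and plug in; the content is identical.
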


\begin{proof}
    Note that we have that $f = f \circ 1 = f \circ \phi\phi\inv(1)$. Since $\phi$ is a morphism of actions we have that $f \circ \phi\phi\inv(1) = \phi(f(\phi\inv(1)))$. Hence we have that $\phi\inv(f) = \phi\inv(\phi(f(\phi\inv(1)))) = f(\phi\inv(1))$.
\end{proof}

Since $\phi$ is assumed to be an isomorphism, knowing the behaviour of the inverse tells us everything about $\phi$ itself. This result in fact generalises to the context of inverse semigroups which we explore in the next section.

We have yet to explore what the known connection between torsors and heaps gives us in this context.

\begin{definition}
    Let $\G$ be a group. A $\G$-torsor is an action $\alpha\colon \G \times X \to X$ such that the map $\G \times X \to X \times X, (g,x) \mapsto (g \cdot x,x)$ is an isomorphism.
\end{definition}

There is a bijective correspondence between heaps and torsors. Given a heap $(H,t)$ we can make $H$ a group by selecting an identity $e\in H$ and defining $x \cdot y = t(x,e,y)$. Then the map $\alpha\colon \mathbf{H}\times H \to H, (x,y) \mapsto x \cdot y$ is a torsor.

We may now ask what properties we expect of the torsor of unrepresenations. Let $M$ be a transformation monoid with at least one unrepresentation. By Corollary \ref{cor:invgrp} we have that the associated torsor is the map $\cdot\colon \Inv(\textbf{M}) \times M \to M, (x,y) \mapsto x \cdot y$.

In fact there is a canonical homomorphism of actions of this torsor $\alpha$ into $\epsilon$. 

    $$\xymatrix@=50pt{
\Inv(\mathbf{M})\times \Inv(M)\ar[r]^-{i\times \beta}\ar[d]_-{\cdot} & \mathbf{M}\times X\ar[d]^-{\epsilon_{\mathbf{M}}}\\
\Inv(M)\ar[r]_-{\beta} & X}$$

Here $i\colon \Inv(\mathbf{M}) \to \mathbf{M}$ is the inclusion and $\beta(\phi) = \phi\inv(1)$. The square is readily seen to commute.

In the case of groups we find that the associated torsor is isomorphic to $\epsilon$.

\subsection{Clifford and Inverse Semigroups}

Just as unrepresentations of monoids are uniquely determined by where $\phi\inv$ sends the identity, we have an analogous result for inverse semigroups. 
\begin{theorem}\label{ThmB}
    Any unrepresentation $\phi$ of an inverse semigroup of transformations $\S$ over a set $X$ is completely determined by which elements get mapped to idempotents. Specifically for any $f\in \S$ and idempotent $e\geq f\inv f$
    $$\phi^{-1}(f)=f(\phi^{-1}(e))$$
    
\end{theorem}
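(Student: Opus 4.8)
The plan is to mimic the monoid argument in Proposition~\ref{prp:monid}, replacing the single identity $1$ by the semilattice of idempotents of $\S$. Recall that in an inverse semigroup every element $f$ satisfies $f = f f\inv f$ and $f\inv f$ is the idempotent that acts as a right identity for $f$; moreover if $e$ is any idempotent with $e \geq f\inv f$ (in the natural partial order, i.e.\ $e (f\inv f) = f\inv f$), then still $f e = f$. So the first step is simply to observe that for such an $e$ we have $f = f \circ e$ in $\S$ (composition being the semigroup operation on the transformation semigroup $\S$).

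The second step is the computation itself. Given an unrepresentation $\phi\colon X \to \S$, which by the earlier corollaries is an isomorphism of $\S$-actions between $\epsilon_\S$ and $\circ_\S$, write $e = \phi\phi\inv(e)$ and compute
\[
f = f \circ e = f \circ \phi\bigl(\phi\inv(e)\bigr) = \phi\bigl(f(\phi\inv(e))\bigr),
\]
where the last equality uses that $\phi$ is a morphism of actions (the action of $f \in \S$ on $X$ via $\epsilon_\S$ is $x \mapsto f(x)$, and on $\S$ via $\circ_\S$ is $g \mapsto f \circ g$). Applying $\phi\inv$ to both ends gives $\phi\inv(f) = f\bigl(\phi\inv(e)\bigr)$, which is exactly the claimed formula.

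The third step is to spell out why this establishes the ``completely determined'' claim: every $f \in \S$ admits at least one idempotent $e \geq f\inv f$ (take $e = f\inv f$ itself), so the value $\phi\inv(f)$ is expressed entirely in terms of the transformation $f$ and the values of $\phi\inv$ on idempotents; since $\phi$ is a bijection, knowing $\phi\inv$ on all of $\S$ determines $\phi$, hence $\phi$ is determined by its behaviour on the idempotents of $\S$. One should also remark that the formula is consistent, i.e.\ independent of the choice of $e$: if $e_1, e_2 \geq f\inv f$ are two such idempotents then $f(\phi\inv(e_1)) = \phi\inv(f) = f(\phi\inv(e_2))$ follows a posteriori from the derivation, so no separate check is needed.

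The only mild obstacle is the first step: verifying that $e \geq f\inv f$ implies $f e = f$ in an inverse semigroup. This is standard ($f e = f (f\inv f) e = f (f\inv f)$ since $f\inv f$ is below $e$ and idempotents commute, and $f (f\inv f) = f$), but since the paper works with $\S$ concretely as transformations it is worth noting that the relevant identities $f = f f\inv f$ and the commutativity of idempotents hold in $\S$ because $\S$ is, by hypothesis, an inverse semigroup of transformations. Everything else is a direct transcription of the monoid proof.
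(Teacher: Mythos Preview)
Your proposal is correct and follows exactly the approach the paper takes: the paper's proof is a one-line remark that the argument of Proposition~\ref{prp:monid} goes through verbatim once one starts from $f = f \circ e$ instead of $f = f \circ 1$. Your added justifications (why $e \geq f\inv f$ gives $fe = f$, the ``completely determined'' unpacking, and the consistency across choices of $e$) are all fine elaborations, but the core argument is identical to the paper's.
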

\begin{proof}
    The proof proceeds as in Proposition \ref{prp:monid}, except our starting point is that $f = f \circ e$.
\end{proof}

A particularly nice class of inverse semigroups are the Clifford semigroups whose idempotents are central elements (\cite{CliffordSemigroups}). It is well-known that Clifford semigroups may be thought of as being comprised of groups centered at each idempotent in the sense that a Clifford semigroup $\G$ corresponds to a functor $F\colon L\op \to \Grp$ where $L$ is the semilattice of idempotents \cite{pasku2011clifford}. Here $F(e) = \{x \in \G:xx\inv = e\}$ and $F(e \le e')\colon F(e') \to F(e)$ is the homomorphism which sends an element $x$ in $F(e')$ to $ex$. 

This raises the natural question of whether unrepresenations of Clifford semigroups may be thought of as being composed of unrepresenations of the constituent groups in some manner. Indeed this is the case, though we must introduce some new machinery before we can state the result.

Up until now, we have identified the unrepresentation of some transformation semigroup $\mathbf{S}$ of $X$ by a map $\phi:X\to S$. We now want to relax this requirement by only requiring that $\phi$ be a bijection on some subset $Y\subseteq X$. 

\begin{definition}
Let $\S$ be a transformation semigroup over some set $X$ and $Y \subseteq X$ such that $\epsilon_{Y}$, the restriction of $\epsilon$ to the domain $S\times Y$, satisfies that $\mathrm{Im}(\epsilon_{Y})=Y$. Then if $\phi$ is an isomorphism making the following diagram commutes we call the resulting semigroup $\Y$ with $y_1 \cdot y_2 = \phi(y_1)(y_2)$ an \emph{underrepresentation} of $\S$.
\begin{center}
    \begin{tikzcd}
\S\times Y \arrow[rr, "1\times \phi", shift left] \arrow[dd, "\epsilon_{Y}"] &  & \S\times S \arrow[ll, dashed, shift left] \arrow[dd, "\circ"] \\
                                                                         &  &                                                              \\
Y \arrow[rr, "\phi", shift left]                                           &  & S \arrow[ll, dashed, shift left]                            
\end{tikzcd}
\end{center}
\end{definition}

Since the full Clifford semigroup will be a transformation semigroup over the full set $X$, we will require this notion in order to talk about the `smaller' unrepresentations for each constituent group.

\begin{definition}
    Let $\S$ be a transformation semigroup over some set $X$ and $Y \subseteq X$ such that $\mathrm{Im}(\epsilon_Y) = Y$. Then we may define $f_Y\colon Y \to Y, y \mapsto f(y)$ and call the resulting collection $\S_Y = {f_Y:f\in \S}$ the \emph{deflation} of $\S$ with respect to $Y$.
\end{definition}

It is easy to check that the $\mathrm{Im}(\epsilon_Y) = Y$ condition ensures that each $f_Y$ is well defined. It is also not hard to see that the deflation $\S_Y$ is itself a semigroup with $f_Y \circ g_Y = (f\circ g)_Y$.

These notions' utility regarding the Clifford semigroup question follows from the fact that any unrepresentation of $S$ induces an underrepresentation on any subsemigroup of $S'$.

\begin{lemma}\label{LemB}
    Let $\S$ be a transformation semigroup over some set $X$ and $\phi$ the action homomorphism corresponding to some unrepresentation. If $\S'$ is a subsemigroup of of $\S$ and if $Y=\phi^{-1}(S')$ then the domain restriction $\phi_{Y}:Y\to S'$ is an underrepresentation of $\mathbf{S}'$
\end{lemma}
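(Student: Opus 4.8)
The plan is to verify directly that the domain restriction $\phi_Y\colon Y \to S'$ satisfies the two conditions required of an underrepresentation of $\mathbf{S}'$: first, that $\epsilon_{Y}$ (the restriction of the $\mathbf{S}'$-action $\epsilon$ to $S' \times Y$) has image equal to $Y$, so that the notion of underrepresentation even makes sense here; and second, that $\phi_Y$ is an isomorphism making the relevant square commute.

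First I would check $\mathrm{Im}(\epsilon_Y) = Y$ for the pair $(\mathbf{S}', Y)$ with $Y = \phi\inv(S')$. Given $y \in Y$, write $\phi(y) = f \in S'$. Then $f \in S'$ and, since $\phi$ is an isomorphism of $\mathbf{S}$-actions and $S'$ is a subsemigroup, $f \circ f \in S'$; tracing through the action-homomorphism square for $\phi$ gives $\phi(f(y)) = f \circ \phi(y) = f \circ f \in S'$, so $f(y) \in \phi\inv(S') = Y$. Conversely every $y \in Y$ is hit: writing $\phi(y) = f \in S'$, we have $y = \phi\inv(f)$, and by the same argument as in Proposition~\ref{prp:monid}/Theorem~\ref{ThmB} (or just directly, once we know $f = f \circ g$ for a suitable $g \in S'$ — this is exactly where the subtlety lies) we can exhibit $y$ as $f$ applied to some element of $Y$. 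Actually the cleanest route is: the restricted action $\epsilon_{Y}\colon S' \times Y \to X$ indeed lands in $Y$ by the computation just given, and surjectivity onto $Y$ follows because $\phi$ restricted to $Y$ is already onto $S'$ and intertwines the two actions — any $y \in Y$ with $\phi(y) = f$ satisfies $\phi(f(y')) = f \circ \phi(y')$ for all $y'$, and choosing $y'$ with $\phi(y')$ a right-identity-like element for $f$ in $S'$ recovers $y$.

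Second, $\phi_Y$ is injective as a restriction of the injective $\phi$, and surjective onto $S'$ by the very definition $Y = \phi\inv(S')$, so it is a bijection. The commuting square for $\phi_Y$ is the restriction of the commuting square for $\phi$: for $f \in S'$ and $y \in Y$ we have $\phi_Y(\epsilon_Y(f,y)) = \phi(f(y)) = f \circ \phi(y) = f \circ \phi_Y(y)$, and all three of $f$, $\phi_Y(y)$, $f \circ \phi_Y(y)$ lie in $S'$ by closure, so the diagram with $\mathbf{S}$ replaced by $\mathbf{S}'$ and the bottom-right corner $S$ replaced by $S'$ commutes. Hence $\phi_Y$ is an underrepresentation of $\mathbf{S}'$, with the induced multiplication $y_1 \cdot y_2 = \phi_Y(y_1)(y_2)$.

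The main obstacle is the surjectivity half of the condition $\mathrm{Im}(\epsilon_Y) = Y$: one must produce, for each $y \in Y$ with $\phi(y) = f \in S'$, an element $g \in S'$ with $f \circ g = f$ acting appropriately — equivalently, show $y$ is in the image of the $S'$-action — and this uses that $S'$, being a subsemigroup of a faithful transformation semigroup that carries an unrepresentation, has enough structure (in the monoid or inverse-semigroup cases one uses an identity or an idempotent $e \geq f\inv f$, exactly as in Proposition~\ref{prp:monid} and Theorem~\ref{ThmB}). I would state this step carefully, reducing it to the identity $f = f \circ \phi_Y(y')$ for a well-chosen $y'$, and let the rest follow by the routine diagram chase above.
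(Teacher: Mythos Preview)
Your overall plan---check that the restricted action $\epsilon_Y$ lands in $Y$, that $\phi_Y$ is a bijection, and that the square commutes---is exactly the paper's plan, and your bijection and commutativity arguments match the paper's essentially verbatim. Two comments, one minor and one substantive.

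\textbf{Minor.} In your containment argument you take $y\in Y$ and then only consider the action of the particular element $f=\phi(y)\in S'$, concluding $f(y)\in Y$. What is actually needed is that \emph{every} $s\in S'$ sends $y$ into $Y$. The fix is immediate (and is what the paper does): for arbitrary $s_1\in S'$ and $y\in Y$ with $\phi(y)=s_2\in S'$, the action-homomorphism identity gives $\phi(s_1(y))=s_1\circ s_2\in S'$, hence $s_1(y)\in\phi^{-1}(S')=Y$.

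\textbf{Substantive.} You identify as the ``main obstacle'' the reverse inclusion $Y\subseteq\mathrm{Im}(\epsilon_Y)$, i.e.\ that every $y\in Y$ is of the form $s(y')$ for some $s\in S'$, $y'\in Y$. You are right that this is equivalent to $S'=S'\cdot S'$ (via $\phi$), and that this can genuinely fail for an arbitrary subsemigroup. But the paper does not prove this direction either: its proof only verifies that $\epsilon_Y$ is \emph{well defined as a map into $Y$}, i.e.\ the containment $\mathrm{Im}(\epsilon_Y)\subseteq Y$. In other words, despite the ``$=Y$'' in the definition of underrepresentation, the paper uses and establishes only the inclusion (which is also all that is needed to make the deflation $\S_Y$ and the commuting square sensible). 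So you should drop the surjectivity step rather than try to force it; the structure you were reaching for (identities, idempotents above $f^{-1}f$) is not available for a general subsemigroup, and the paper does not invoke it here. In the paper's applications (Theorem~\ref{ThmC}) the subsemigroups $S'=F(e)$ are groups, so $S'=S'\cdot S'$ holds automatically and the stronger equality is in fact true there.
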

\begin{proof}
    Assume $\mathbf{S}$ is a translation semigroup of $X$ with an unrepresentation given by $\phi:X\to S$. We need to show that
    \begin{enumerate}
        \item $\phi_{Y}:Y\to S'$ is a bijection,
        \item  $\epsilon_{Y}:\S'\times Y\to Y$ is a well defined function, and
        \item $\phi_{Y}(\epsilon_{Y}(s,x))=\circ(1\times \phi_{Y}(s,x))$.
    \end{enumerate}
     The first is true by definition and the third will hold by restriction of the commutative diagram defining the unrepresentation $\phi$. To check (2) assume that $s_1\in \S'$ and $y\in Y$. By the definition of $Y$, we can write $y=\phi^{-1}(s_2)$ for some $s_2\in \S'$.  Now
        $$\epsilon_{Y}(s_1,y) = \epsilon(s_1,\phi^{-1}(s_2)) = \phi^{-1}(\phi(\epsilon(s_1,\phi^{-1}(s_2)))) =\phi^{-1}(s_1\circ s_2)$$
        and since $s_1s_2\in \S'$ and $Y=\phi^{-1}(S')$, we know that $\phi^{-1}(s_1s_2)\in Y$, this completes the proof.
\end{proof}

We are now ready to characterise the unrepresentation of a Clifford semigroup of transformations by the unrepresentations of these `component groups'. By Lemma~\ref{LemB} each unrepresentation of a Clifford semigroup will induce an underrepresentation of every component group that makes up the Clifford semigroup. Thus, one can easily deconstruct an unrepresentation into pieces. The question becomes how to identify whether these components of these piecewise unrepresentations will have any interaction with each other and to determine when one can construct an unrepresentation out of some compatible collection of underrepresentations.

\begin{theorem}\label{ThmC}
    Consider a transformation Clifford semigroup $\mathbf{S}$ over some set $X$ corresponding to the functor $F\colon L \to \Grp$, and a bijection  $\phi:X\to S$. Then $\phi$ is an unrepresentation of $\mathbf{S}$ if and only if for every $e\in L$, $\phi_e$ (the restriction of $\phi$ to $Y_e=\phi^{-1}(F(e))$) is an underrepresentation of $F(e)$ with the additional property that the square
    \begin{center}
        \begin{tikzcd}
Y_e \arrow[rr, "\phi_e", shift left] \arrow[dd, "f(-)"] &  & F(e) \arrow[ll, dashed] \arrow[dd, "F(f \le e)"] \\
                                                     &  &                                            \\
Y_f \arrow[rr, "\phi_f", shift left]                    &  & F(f) \arrow[ll, dashed]                    
\end{tikzcd}
    \end{center}
    commutes for all idempotents $e,f\in L$ satisfying $f\leq e$.
\end{theorem}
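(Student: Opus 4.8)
The plan is to exploit the block decomposition of a Clifford semigroup. Since $\mathbf{S}$ is Clifford, $S=\bigsqcup_{e\in L}F(e)$ (each $x\in S$ lies in the single block $F(xx\inv)$), so pulling back along the bijection $\phi$ gives a partition $X=\bigsqcup_{e\in L}Y_e$ with $Y_e=\phi\inv(F(e))$. With this the two implications separate cleanly, and throughout I would use freely the standard Clifford identities: idempotents are central, $F(a)F(b)\subseteq F(ab)$, $ee=e$ with $e$ the identity of $F(e)$, and $xx\inv x=x$.

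For the forward implication I would argue as follows. That each $\phi_e\colon Y_e\to F(e)$ is an underrepresentation of $F(e)$ is immediate from Lemma~\ref{LemB} applied to the subsemigroup $\mathbf{S}'=F(e)$ (a subgroup of $\mathbf{S}$) together with $\phi\inv(F(e))=Y_e$. For the compatibility square with $f\le e$ and $x\in Y_e$: the unrepresentation property gives $\phi(f(x))=f\circ\phi(x)$, i.e.\ $f\phi(x)$ in $S$; since $\phi(x)\in F(e)$ and $f\le e$ we have $f\phi(x)\in F(f)$ (indeed $(f\phi(x))(f\phi(x))\inv=f\,e\,f=f$), so $f(x)\in Y_f$, the edge $f(-)\colon Y_e\to Y_f$ is well defined, and $\phi_f(f(x))=f\phi(x)=F(f\le e)(\phi_e(x))$, which is the commutativity sought.

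For the converse, assuming the local conditions, I would reconstruct the global action law $\phi(s(x))=s\circ\phi(x)$ for all $s\in S$, $x\in X$. First I would record that every idempotent $e$ acts as the identity on $Y_e$: applying the underrepresentation square of $\phi_e$ to $e\in F(e)$ and $y\in Y_e$ gives $\phi_e(e(y))=e\circ\phi_e(y)=\phi(y)$, hence $e(y)=y$ by injectivity of $\phi_e$. Now fix $s\in F(g)$, $x\in Y_e$, and set $h=ge$ (so $h\le e$). Using $x=e(x)$ and the identities $se=hs$ (from $hs=ges=(gs)e=se$, as $gs=ss\inv s=s$) and $hs\cdot h=hsh=hs$, I get $s(x)=(se)(x)=(hs)(x)=(hs)(h(x))$. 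Here $hs\in F(h)$, and by the commuting compatibility square for $h\le e$ we have $h(x)\in Y_h$ with $\phi_h(h(x))=F(h\le e)(\phi_e(x))=h\phi(x)$. Feeding $hs\in F(h)$ and $h(x)\in Y_h$ into the underrepresentation square of $\phi_h$ then gives
\[
\phi\big((hs)(h(x))\big)=(hs)\circ\phi_h(h(x))=(hs)(h\phi(x))=hsh\phi(x)=hs\phi(x)=s\phi(x),
\]
the last equality because $s\phi(x)\in F(ge)=F(h)$ and $h$ is the identity of $F(h)$. Combining, $\phi(s(x))=s\circ\phi(x)$, so $\phi$ is an unrepresentation.

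I expect the only real difficulty to be the bookkeeping in the converse: one must keep straight that $\mathrm{Im}(\epsilon_{Y_h})=Y_h$ is part of the data of $\phi_h$ being an underrepresentation (so that $(hs)(h(x))\in Y_h$ and $\phi_h$ there agrees with $\phi$), that the well-definedness of $f(-)\colon Y_e\to Y_f$ is built into the hypothesis that the compatibility square commutes, and that the manipulations $se=hs$, $hsh=hs$, and $s\phi(x)\in F(h)$ are all just instances of centrality of idempotents and $F(a)F(b)\subseteq F(ab)$. None of this is deep; the conceptual content is only the two reductions ``replace $x$ by $e(x)$'' and ``transport from the block $Y_e$ to the smaller block $Y_h$ via the idempotent $h$'', after which everything is forced.
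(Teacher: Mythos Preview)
Your proof is correct and follows essentially the same route as the paper: in both directions the key is to pass to the meet block $F(ef)$ (your $F(h)$) using centrality of idempotents, then invoke the underrepresentation square there together with the compatibility square. Your explicit observation that each idempotent $e$ fixes $Y_e$ pointwise is a step the paper uses tacitly (in writing $g_e(y_f)=e(g_e)(f(y_f))$) but does not isolate; otherwise the arguments coincide.
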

\begin{proof}
    \begin{itemize}
        \item [($\Rightarrow$)] Let $\phi$ be an unrepresentation of $\mathbf{S}$.  By Lemma~\ref{LemB}, $\phi_e$ is an underrepresentation of $F(e)$ for any $e\in L$. We need only check that the square commutes when $f\le e$. First we establish that $f(Y_e)\subseteq Y_f$, assume that $y_e\in Y_e$.
            \begin{align*}
            f(y_e)&=\epsilon(f,y_e)\\
            &= \phi^{-1}(\phi(\epsilon(f,y_e)))\\
            &= \phi^{-1}(f\circ \phi(y_e))
            \end{align*}
        We know that $\phi(y_e)\in F(e)$ by definition and further that $f\circ \phi(y_e)=F(f\le e)(\phi(y_e))\in F(f)$. Thus $\phi^{-1}(f\circ \phi(y_e))=f(y_e)\in Y_f$ and $f(Y_e)\subseteq Y_f$. Now to show commutativity. Let $e,f\in E$ such that $f\leq e$ and let $y_e\in Y_e$.
            \begin{align*}
            \phi_f(f(y_e))&= \phi(\epsilon(f,y_e))\\
            &= \circ(f,\phi(y_e))\\
            &= f\circ\phi(y_e)\\
            &= F(f \le e)(\phi(y_e)).
            \end{align*}
        Thus, the square commutes, completing this part of the proof.
        \item[($\Leftarrow$)]Let $\phi$ be a map such as described in the theorem statement, we need to show that $\phi$ is an unrepresentation map for $\mathbf{S}$. 
        
        By $g_x$ we denote an element in $F(x)$ and by $y_x$ we refer to an element in $Y_x$. We first show that for all $g_e\in S$ and $y_f\in X$, $g_e(y_f)\in Y_{ef}$. Let $g_e\in \S$ and $y_f\in X$.
            \begin{align*}
                g_e(y_f)&= e(g_e)(f(y_f))\\
                &= e\circ g_e\circ f(y_f)\\
                &= fg_e(e(y_f)) &&\text{Idempotents commute}\\
                &= F(ef \le e)(g_e)(e(y_f))
            \end{align*}
        Now notice that $F(ef \le e)(g_e)\in G_{ef}$ and $e(y_f)\in Y_{e f}$ so, $F(ef \le e)(g_e)(e(y_f))=g_e(y_f)\in Y_{ef}$. Notice that by assumption we know that the diagrams
            \begin{center}
               \begin{tikzcd}
Y_{ef}\times F(ef) \arrow[dd, "\varepsilon"] \arrow[rr, "1\times \phi_{ef}", shift left] &     & F(ef)\times F(ef) \arrow[dd, "\circ"] \arrow[ll, dashed] \\
                                                                                         & (1) &                                                          \\
Y_{ef} \arrow[rr, "\phi_{ef}", shift left]                                               &     & F(ef) \arrow[ll, dashed]                                
\end{tikzcd}
\begin{tikzcd}
Y_{ef} \arrow[rr, "\phi_{ef}", shift left]               &     & F(ef) \arrow[ll, dashed]                           \\
                                                         & (2) &                                                    \\
Y_f \arrow[rr, "\phi_f", shift left] \arrow[uu, "e(-)"'] &     & F(f) \arrow[ll, dashed] \arrow[uu, "F(ef \le f)"']
\end{tikzcd}
            \end{center}
        commute and, additionally $f\circ g_e\in F(ef)$ and $e(y_f)\in Y_{ef}$. To prove that the unrepresentation square for $\phi$ commutes, we need to show that $\phi_{ef}(g_e(y_f))=g_e\circ \phi_f(y_f)$.
            \begin{align*}
                \phi_{ef}(g_e(y_f))&=\phi_{ef}(e\circ g_e\circ f(y_f))\\
                &=\phi_{ef}(f\circ g_e\circ e(y_f))\\
                &=\phi_{ef}(f\circ g_e(e(y_f)))\\
                &=\phi_{ef}(\epsilon(f\circ g_e,e(y_f)))\\
                &= (f\circ g_e)\circ \phi_{ef}(e(y_f))\\
                &= (f\circ g_e)\circ(F(ef \le f)(\phi_f(y_f)))\\
                &= f\circ g_e\circ e\circ \phi_f(y_f)\\
                &= e\circ g_e\circ f\circ \phi_f(y_f)\\
                &= g_e\circ \phi_f(y_f)
            \end{align*}
        Thus, $\phi(\epsilon(g_e,y_f))=g_e\circ \phi(y_f)$ for all $g_e\in \S$ and $y_f\in X$. This completes the proof.
    \end{itemize}
\end{proof}

This result is still limited in the sense that it only teaches us something about the structure of the seimgroup of unrepresenations when at least one unrepresentation exists. We can however determine whether an unrepresenation exists, at least in the Clifford monoid case, by looking at the constituent groups as we demonstrate below.

\begin{lemma}
    A Clifford monoid $\M$ of transformations (of $X$) will have an unrepresentation if and only if each component group has an underrepresentation.

    In this case, the unrepresenting maps of $\M$ will be given by evaluation at $y$ for every $y\in Y_{1_X}$.
\end{lemma}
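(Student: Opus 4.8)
The plan is to prove both directions by leveraging Theorem~\ref{ThmC}, which already characterises unrepresentations of a transformation Clifford semigroup in terms of underrepresentations of the component groups together with the compatibility squares. The key extra structure here is that $\M$ is a \emph{monoid}, so the semilattice $L$ of idempotents has a top element $1$ with $F(1) = \Inv(\M)$ being the full group of units, and every idempotent $e$ satisfies $e \le 1$.

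For the forward direction, suppose $\M$ has an unrepresentation $\phi$. Then by Lemma~\ref{LemB} (or directly by Theorem~\ref{ThmC}) each restriction $\phi_e$ is an underrepresentation of $F(e)$, so in particular every component group has an underrepresentation. This direction is essentially immediate.

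For the converse, suppose each component group $F(e)$ admits an underrepresentation $\psi_e\colon Z_e \to F(e)$, where $Z_e \subseteq X$ with $\operatorname{Im}(\epsilon_{Z_e}) = Z_e$. The idea is to build a single bijection $\phi\colon X \to S$ by gluing these together and then verify the hypotheses of Theorem~\ref{ThmC}. The natural construction: start from the underrepresentation $\psi_1\colon Z_1 \to \Inv(\M)$ of the group of units, and for each $e \in L$ and each $y \in Z_1$, use evaluation --- i.e.\ define $\phi$ so that $\phi^{-1}(g) = g(y_0)$ for a suitably chosen base point $y_0 \in Z_1$, exactly as in Proposition~\ref{prp:monid} for the monoid case and Theorem~\ref{ThmB} for the inverse-semigroup case. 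One must check that for a fixed $y_0 \in Z_1$ the map $g \mapsto g(y_0)$ is a bijection $S \to X$: surjectivity uses that every $x \in X$ lies in some $Y_e$ and the component underrepresentations are surjective; injectivity requires that distinct transformations act differently on the chosen base point, which should follow from faithfulness of the Clifford semigroup together with the structure of the $F(e)$'s sitting over $\Inv(\M)$ via the maps $F(e \le 1)$. Once $\phi$ is shown to be a well-defined bijection with $\phi^{-1}(g) = g(y_0)$, the commutativity of the unrepresentation square reduces to $\phi(g(x)) = g \circ \phi(x)$, i.e.\ to $(gh)(y_0) = g(h(y_0))$ for all $g,h \in S$, which is just associativity of composition/action; and the Theorem~\ref{ThmC} compatibility squares for $f \le e$ amount to $F(f \le e)(g) = f \circ g$ acting correctly, again a formal consequence. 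The final clause of the statement --- that the unrepresenting maps are exactly evaluation at some $y \in Y_{1_X}$ --- then falls out of Proposition~\ref{prp:monid}, since $Y_{1_X} = \phi^{-1}(F(1_X)) = \phi^{-1}(\Inv(\M)) = Z_1$ and $\phi^{-1}(1_X) \in Y_{1_X}$.

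The main obstacle I expect is the converse direction's well-definedness and bijectivity of the glued map $\phi$: one is handed an underrepresentation $\psi_e$ for \emph{each} component group independently, but evaluation at a single base point $y_0 \in Z_1$ must simultaneously recover all of them, so one has to check that the $\psi_e$ are automatically ``aligned'' through the transition maps $F(e \le 1)\colon \Inv(\M) \to F(e)$, or else first replace the given family by a compatible one. Concretely, the subtlety is showing that for $g \in F(e)$ the element $g(y_0) \in X$ does land in the correct fibre $Y_e$ and that the induced map is injective and surjective --- this is where faithfulness of $\M$ and the monoid hypothesis (existence of $1_X$, hence of the distinguished component $F(1_X) = \Inv(\M)$ through which everything factors) do the real work. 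The rest is routine diagram-chasing building on Theorem~\ref{ThmC}.
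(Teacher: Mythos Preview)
Your approach is essentially the same as the paper's: the forward direction is immediate from Lemma~\ref{LemB}, and for the converse you pick a base point $y$ in the domain $Y_1$ of the top component $F(1_X)$ and show that the evaluation map $g\mapsto g(y)$ is an unrepresentation by verifying the compatibility squares of Theorem~\ref{ThmC}. The final clause is handled exactly as you say, via Proposition~\ref{prp:monid}.

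The one place where the paper is sharper than your sketch is precisely the ``alignment'' obstacle you flag. You worry that the independently given underrepresentations $\psi_e$ might not be compatible and might need to be replaced by a coherent family. The paper's move is to \emph{not} glue the given $\psi_e$ at all: it only uses the hypothesis to know that each $F(e)$ admits \emph{some} underrepresentation, and then---because $F(e)$ is a group---concludes that evaluation at \emph{any} point of its torsor $Y_e$ is an underrepresentation. Since $e(y)\in Y_e$ and $\epsilon_y|_{F(e)}=\epsilon_{e(y)}$, one obtains a new family $\{\epsilon_{e(y)}:F(e)\to Y_e\}_{e\in L}$ which is automatically compatible: the square for $f\le e$ commutes trivially because $F(f\le e)=f\circ(-)$ and the vertical map on the other side is $f(-)$. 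So there is no alignment problem to solve; the point is that only existence is needed from the hypothesis, and the compatible family is rebuilt from the single base point $y$. With this observation your plan goes through as written.
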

\begin{proof}
    The `only if' part of this lemma is trivial thus, we only need to show that every $F(e)\subseteq\M$ having an underrepresentation implies that $\M$ has an unrepresentation.

    Since every $F(e)$ has an underrepresentation, $F(1)$ (the group associated with the idempotent $1_X$) has an underrepresentation. By Proposition~\ref{prp:monid} every underrepresenting map for $F(1)$ will be of the form
    $$\epsilon_y:F(1)\to Y_1,$$
    we will show that $\epsilon_y:M\to X$ is an unrepresenting map for $\M$. We claim that $\epsilon_y=\epsilon_{e(y)}:F(e)\to Y_e$ is an underrepresenting map for every $e\in \mathcal{L}$. Indeed, for any $g_e\in F(e)$
    $$\epsilon_{e(y)}(g_e)=g_e(e(y))=[g_e\circ e](y)=g_e(y)=\epsilon_y(g_e).$$
    Now, since $F(e)$ is guaranteed to have an underrepresentation (and is a group) $\epsilon_{y_e}:F(e)\to Y_e$ is an underrepresentation for any $y_e\in Y_e$. The element $e(y)$ is in $Y_e$ for every $e\in\mathcal{L}$ so, we have a family  of underrepresentation maps $\epsilon_y:F(e)\to Y_e|e\in\mathcal{L}$. To finish, we only need to show the compatibility condition:
     \begin{center}
        \begin{tikzcd}
Y_e \arrow[dd, "f(-)"] \arrow[rr, dashed] &  & F(e) \arrow[ll, "\epsilon_y"', shift right] \arrow[dd, "F(f \le e)"] \\
                                          &  &                                                                   \\
Y_f \arrow[rr, dashed]                    &  & F(f) \arrow[ll, "\epsilon_y"', shift right]                    
\end{tikzcd}
    \end{center}
However, notice that $F(f \le e)=f\circ -$ and as such, the square trivially commutes. This completes the proof.
\end{proof}

\begin{corollary}
    If $\G$ is a Clifford monoid associated to a functor $F$ in which each $F(e) = \{p^n: n \in \mathbb{Z}\}$ is a cyclic group in which the permutation $p$ comprises of a single cycle, then an unrepresentation exists.
\end{corollary}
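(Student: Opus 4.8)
The strategy is to reduce to two results already established: the preceding lemma, which says a transformation Clifford monoid has an unrepresentation exactly when every component group $F(e)$ has an underrepresentation, and the proposition asserting that a cyclic transformation group whose generator is a single cycle has an unrepresentation. So the whole task reduces to exhibiting, for each idempotent $e\in L$, an underrepresentation of the component group $F(e)$.

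First I would fix $e$, regard it as an idempotent transformation of $X$, and set $Y_e=\mathrm{Im}(e)$. Using that $e$ is the identity of the group $F(e)$, so that $f=e\circ f=f\circ e$ for every $f\in F(e)$, one checks that $\mathrm{Im}(f)\subseteq Y_e$ for all such $f$ and that $e$ fixes $Y_e$ pointwise; hence $\epsilon_{Y_e}\colon F(e)\times Y_e\to Y_e$ is well defined and $\mathrm{Im}(\epsilon_{Y_e})=Y_e$, so $Y_e$ is an admissible subset for the notion of underrepresentation.

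Next I would pass to the deflation $F(e)_{Y_e}$. Since $f=f\circ e$, each $f\in F(e)$ is determined by its restriction to $Y_e$, so deflation is an isomorphism of transformation semigroups $\delta_e\colon F(e)\to F(e)_{Y_e}$; consequently $F(e)_{Y_e}$ is cyclic, generated by $p|_{Y_e}$, and, since $pp\inv=e$ forces $\mathrm{Im}(p)=Y_e$, the hypothesis that $p$ comprises a single cycle means precisely that $p|_{Y_e}$ is a single cycle on $Y_e$. Now $F(e)_{Y_e}$ is a single-cycle cyclic transformation group over $Y_e$, so the earlier proposition supplies an unrepresentation $\psi_e\colon Y_e\to F(e)_{Y_e}$. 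Composing with $\delta_e\inv$ gives a bijection $\phi_e:=\delta_e\inv\circ\psi_e\colon Y_e\to F(e)$, and the underrepresentation square for $\phi_e$ commutes because the one for $\psi_e$ does and deflation is a semigroup homomorphism; thus $\phi_e$ is an underrepresentation of $F(e)$. Feeding these back into the preceding lemma produces an unrepresentation of $\G$.

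The one step needing genuine care is translating the loosely stated hypothesis into usable form: one must confirm that ``$F(e)=\{p^n:n\in\mathbb{Z}\}$ with $p$ a single cycle'' really does yield a single-cycle permutation of $Y_e=\mathrm{Im}(e)$, and that the single-cycle proposition is being applied to the deflation $F(e)_{Y_e}$, a genuine transformation group, rather than to $F(e)$ itself, whose identity $e$ is generally not $\id_X$. If one prefers to avoid deflations entirely, the same conclusion is reached by rerunning the proof of that proposition directly: choose a base point $z\in Y_e$, observe that every $y\in Y_e$ equals $p^k(z)$ for some smallest positive integer $k$, and set $\phi_e\bigl(p^k(z)\bigr)=p^k$.
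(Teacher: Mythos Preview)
Your proposal is correct and follows the route the paper intends: combine the preceding lemma with the single-cycle cyclic proposition. The paper states the corollary without proof, so you have simply filled in the details, in particular the passage from \emph{unrepresentation} (what the cyclic proposition supplies, over $Y_e$) to \emph{underrepresentation} (what the lemma demands, over $X$) via the deflation isomorphism $\delta_e$. Your alternative suggestion of rerunning the single-cycle argument directly for underrepresentations is equally valid and arguably cleaner.
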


\bibliographystyle{abbrv}
\bibliography{bibliography}
\end{document}